%
%


\documentclass[letterpaper,12pt,oneside,final]{article}



\usepackage[top=1in, bottom=1in, left=1in, right=1in]{geometry} 


\newcommand{\href}[1]{#1} 

\usepackage{ifthen}
\newboolean{ElectronicVersion}
\setboolean{ElectronicVersion}{true} 

\usepackage{amsmath,amssymb,amstext} 
\usepackage[pdftex]{graphicx} 
\usepackage{setspace}
\usepackage{subfig}

\usepackage{epstopdf} 


\ifthenelse{\boolean{ElectronicVersion}}{
    \usepackage[letterpaper=true,pdftex,bookmarks,pagebackref,
	plainpages=false, 
        pdfpagelabels=true, 
        pdftitle={Guaranteed clustering and biclustering via SDP}, 
        pdfauthor=Brendan Ames	 
        ]{hyperref}}{}

\setlength{\parskip}{\medskipamount}


\let\origdoublepage\cleardoublepage
\newcommand{\clearemptydoublepage}{%
  \clearpage{\pagestyle{empty}\origdoublepage}}
\let\cleardoublepage\clearemptydoublepage




\newtheorem{theorem}{Theorem}[section]

\newtheorem{cor}{Corollary}[section]

\newtheorem{lemma}{Lemma}[section]


\newcommand{\A}{\mathcal{A}}

\newcommand{\R}{\mathbf{R}}

\newcommand{\E}{\mathbf{E}}


\newcommand{\Diag}{\mathrm{Diag}\,}

\newcommand{\Null}{\mathrm{Null}\,}
\newcommand{\rank}{\mathrm{rank}\,}
\newcommand{\spn}{\mathrm{span}\,}
\newcommand{\st}{\mathrm{s.t.}\;}
\newcommand{\tr}{\mathrm{Tr}\,}

\renewcommand{\b}{\mathbf{b}}

\newcommand{\e}{\mathbf{e}}

\renewcommand{\u}{\mathbf{u}}
\newcommand{\bv}{\mathbf{v}}
\newcommand{\x}{\mathbf{x}}
\newcommand{\y}{\mathbf{y}}
\newcommand{\z}{\mathbf{z}}
\newcommand{\w}{\mathbf{w}}


\newcommand{\vect}[1] {\ensuremath{\left(\begin{array}{c} #1 \end{array} \right)}} 
\newcommand{\mat}[1] {\ensuremath{ \left(\begin{array} #1 \end{array} \right)}} 
\newcommand{\branchdef}[1] {\ensuremath{ \left\{\begin{array}{ll} #1 \end{array} \right. }} 
\newcommand{\rbra}[1]{\ensuremath{\left( #1 \right)}} 
\newcommand{\bbra}[1]{\ensuremath{\left\{ #1 \right\}}} 
\newcommand{\floor}[1]{\ensuremath{\left\lfloor #1 \right\rfloor}}
\newcommand{\ceil}[1]{\ensuremath{\left\lceil #1 \right\rceil}}

\newcommand{\iid}{independent identically distributed (i.i.d.) }
\newcommand{\pteto}[1]{probability tending exponentially to $1$ as $#1 \ra \infty$}

\newcommand{\kdc}{$k$-disjoint-clique }
\newcommand{\kdcs}{$k$-disjoint-clique subgraph }
\newcommand{\kdb}{$k$-disjoint-biclique }
\newcommand{\kdbs}{$k$-disjoint-biclique subgraph }

\newcommand{\npm}{\mathit{npm}}

\newcommand{\ra}{\rightarrow}

\newcommand{\qed}{\hfill\rule{2.1mm}{2.1mm}}



\numberwithin{equation}{section}
\numberwithin{figure}{section}
\numberwithin{table}{section}



\title{Guaranteed clustering and biclustering via semidefinite programming}
\author{Brendan P.W.~Ames
\thanks{Department of Computing + Mathematical Sciences,
California Institute of Technology,
1200 E. California Blvd., Mail Code 305-16, Pasadena, CA, 91125,  bpames@caltech.edu} }


\begin{document}


\maketitle

\begin{abstract}
Identifying clusters of similar objects in data plays a significant role in a wide range of applications. 
As a model problem for clustering, we consider the densest $k$-disjoint-clique problem, whose goal is to identify the collection of $k$
 disjoint cliques of a given weighted complete graph
 maximizing the sum of the densities of the complete subgraphs induced by these cliques.
In this paper, we establish conditions ensuring exact recovery of the densest $k$ cliques of a given graph
from the optimal solution of a particular semidefinite program.
In particular, the semidefinite relaxation is exact for input graphs corresponding to data consisting of $k$ large, distinct clusters and a smaller number of outliers.

This approach also yields a semidefinite relaxation with similar recovery guarantees for the biclustering problem. Given a set of objects and a set of features exhibited by these objects, biclustering seeks to
 simultaneously group the objects and features according to their expression levels. 
 This problem may be posed as that of partitioning the nodes of a weighted bipartite complete graph such that the sum of the densities of the resulting
 bipartite complete subgraphs is maximized.
As in our analysis of the densest $k$-disjoint-clique problem,
we show that the correct partition of the objects and features can be recovered from the optimal solution
of a semidefinite program in the case that the given data consists of several disjoint sets of objects exhibiting similar features.
Empirical evidence from numerical experiments supporting these theoretical guarantees is also provided.
\end{abstract}

\section{Introduction}

The goal of {\it clustering} is  to partition a given data set into groups of similar objects, called {\it clusters}.
Clustering is a fundamental problem in statistics and machine learning and plays a significant role in a wide range of applications,
including information retrieval, pattern recognition, computational biology, and image processing.
The complexity of finding an optimal clustering depends significantly on the measure of fitness of a proposed partition,
but most interesting models for clustering are posed as an intractable combinatorial problem. 
For this reason, heuristics are used to cluster data in most practical applications.
Unfortunately, although much empirical evidence exists for the usefulness of these heuristics, few theoretical guarantees ensuring the quality of the obtained partition are known,
even for data containing well separated clusters.
For a recent survey of clustering techniques and heuristics, see \cite{berkhin2006survey}.
In this paper, we establish conditions ensuring that the optimal solution of
a particular convex optimization problem yields a correct clustering under certain assumptions on the input data set.

Our approach to clustering is based on partitioning the similarity graph of a given set of data.
Given a data set $S$ and measure of similarity between any two objects, the {\it similarity graph} $G_S$ is the
weighted complete graph with nodes corresponding to the objects in the data set and each edge $ij$ having weight
equal to the level of similarity between objects $i$ and $j$.
For this representation of data, clustering the data set $S$ is equivalent to partitioning the nodes of $G_S$ into disjoint cliques
such that edges connecting any two nodes in the same clique have significantly higher weight than those
between different cliques.
Therefore, a clustering of the data may be obtained by identifying dense, in the sense of having large average edge weight,
subgraphs of $G_S$.

We consider the densest $k$-partition problem as a model problem for clustering.
Given a weighted complete graph $K = (V,E,W)$ and integer $k \in \{1,\dots, |V|\}$, the
{\it densest $k$-partition problem} aims to identify the partition of $V$ into $k$ disjoint sets
such that the sum of the average edge weights of the complete subgraphs induced by these cliques is maximized.
Unfortunately, the densest $k$-partition problem is NP-hard, since it contains the minimum sum of squared Euclidean distance problem,
known to be NP-hard \cite{aloise2009np}, as a special case.
In Section \ref{sec: C results}, we consider the related problem of finding the set of $k$ disjoint complete subgraphs maximizing
the sum of their densities.
We model this problem as a quadratic program with combinatorial constraints and relax to a semidefinite program
using matrix lifting.
This relaxation approach is similar to that employed in several recent papers \cite{peng2007approximating, singh2010ensemble, fan2012multi}, although
we consider a different model problem for clustering and establish stronger recovery properties.
We show that the optimal solution of this semidefinite relaxation coincides with that of the original combinatorial problem
for certain program inputs.  In particular, we show that the set of input graphs for which the relaxation is exact includes the set of graphs
with edge weights concentrated on a particular collection of disjoint subgraphs, and
provide a general formula for the clique sizes and number of cliques that may be recovered.

In Section \ref{sec: B results}, we establish similar results for the biclustering problem.
Given a set of objects and features, {\it biclustering}, also known as {\it co-clustering}, aims to simultaneously group the
objects and features according to their expression levels.
That is, we would like to partition the objects and features into groups of objects and features, called {\it biclusters}, such that
objects strongly exhibit features within their bicluster relative to the features within the other biclusters.
Hence, biclustering differs from clustering in the sense that it does not aim to obtain groups of similar objects, but instead
seeks groups of objects similar with respect to a particular subset of features.
Applications of biclustering include identifying subsets of genes exhibiting similar expression patterns across subsets of experimental
conditions in analysis of gene expression data, grouping documents by topics in document clustering, and grouping customers 
according to their preferences in collaborative filtering and recommender systems.
For an overview of the biclustering problem, see \cite{busygin2008biclustering, fan2010recent}.

As a model problem for biclustering, we consider the problem of partitioning a bipartite graph into dense disjoint subgraphs.
If the given bipartite graph has vertex sets corresponding to sets of objects and features with
edges indicating expression level of each feature by each object,
each dense subgraph will correspond to a bicluster of objects strongly exhibiting the contained features.
Given a weighted bipartite complete graph $K = ((U,V), E, W)$ and integer $k \in \{1,\dots, \min\{|U|, |V| \} \}$,
we seek the set of $k$ disjoint bipartite complete subgraphs with sum of their densities maximized.
We establish that this problem may be relaxed as a semidefinite program and show that, for certain program instances,
the correct partition of $K$ can be recovered from the optimal solution of this relaxation.
In particular, this relaxation is exact in the special case that the edge weights of the input graph
are concentrated on some set of  disjoint bipartite subgraphs.
When the input graph arises from a given data set, the relaxation is exact when the underlying data set consists of several disjoint
sets strongly exhibiting nonoverlapping sets of features.

Our results build upon those of recent papers regarding clusterability of data.
These papers generally contain results of the following form:
if a data set is randomly sampled from a distribution of ``clusterable" data, then the correct partition of the data
can be obtained efficiently using some heuristic, such as the $k$-means algorithm or other iterative partitioning heuristics~\cite{Ostro, ackerman2009clusterability, shamir2002improved, bansal2004correlation}, 
spectral clustering \cite{ng2002spectral, kannan2004clusterings,  rohe2011spectral, balakrishnan25noise},
or convex optimization \cite{AV2, mathieu2010correlation, jalali2011clustering, oymak2011finding}.
Recent papers by Kolar et al. \cite{kolar2011minimax},  Rohe and Yu \cite{rohe2012co}, and Flynn and Perry \cite{flynn2012consistent} establish analogous recovery guarantees for biclustering;
the latter two of these papers appeared shortly after the initial preprint release of this paper.
Our results are of a similar form.
If the underlying data set consists of  several sufficiently distinct  clusters or biclusters, then the correct
partition of the data can be recovered from the optimal solution of our relaxations.
We model this ideal case for clustering using random edge weight matrices constructed so that weight is, in expectation,
concentrated heavily on the edges of a few disjoint subgraphs.
We will  establish that this random model for clustered data contains those previously considered in the literature and, in this
sense, our results are a generalization of these earlier theoretical guarantees.

More generally, our results follow in the spirit of, and borrow techniques from, recent work regarding sparse optimization and, in particular, the nuclear norm relaxation for rank minimization.
The goal of {\it matrix rank minimization} is to find a solution of minimum rank of a given linear system, i.e., to find the optimal solution $X^* \in \R^{m\times n}$ of the 
optimization problem $\min\{\rank X: \A(X) = \b\}$ for given linear operator $\A: \R^{m\times n} \ra \R^p$ and vector $\b \in \R^p$.
Although this problem is well-known to be NP-hard, 
several recent papers (\cite{RFP, Candes-Recht:2008, Ames-Vavasis, gross2009recovering, recht2009simpler, candes2010tight, recht2010null, oymak2011simplified}, among others)
have established that, under certain assumptions on $\A$ and $\b$, the minimum rank solution is equal to the optimal solution of the convex relaxation obtained
by replacing $\rank X$ with the sum of the singular values of $X$, the nuclear norm $\|X\|_*$.
This relaxation may be thought of as a matrix analogue of the $\ell_1$ norm relaxation for the cardinality minimization problem, and these results generalize similar
recovery guarantees for  compressed sensing (see \cite{candes2005decoding, candes2006stable, donoho2006}).
For example, the nuclear norm relaxation is  exact with high probability if $\A$ is a  random linear transform with matrix representation
having i.i.d. Gaussian or Bernoulli entries and $\b = \A(X_0)$ is the image of a sufficiently low rank matrix $X_0$ under $\A$.
We prove analogous results for an instance of rank constrained optimization.
To identify the densest $k$ complete subgraphs of a given graph, we seek  a rank-$k$ matrix  $X$ maximizing some linear function of $X$, depending only on the edge weights $W$ of the input graph,
subject to linear constraints.
We will see that the optimal rank-$k$ solution is equal to that obtained
by relaxing the rank constraint to the corresponding nuclear norm constraint if the matrix $W$ is randomly sampled from a probability distribution satisfying certain assumptions.

\section{A semidefinite relaxation of the densest \kdc problem}
\label{sec: C results}
Given a graph $G = (V,E)$, a {\it clique} of $G$ is a pairwise adjacent subset of $V$. That is, $C \subseteq V$ is a clique of $G$ if $ij \in E$ for every pair of nodes $i,j \in C$.
Let $K_N= (V,E, W)$ be a complete graph with vertex set $V = \{1,2,\dots, N\}$ and nonnegative edge weights $W_{ij} \in [0,1]$ for all $i,j \in V$.
A {\it $k$-disjoint-clique subgraph} of $K_N$ is a subgraph of $K_N$ consisting of $k$ disjoint complete subgraphs, i.e., the vertex sets of each of these subgraphs
is a {\it clique}.
For any subgraph $H$ of $K_N$, the density of $H$, denoted $d_H$, is the average edge weight incident at a vertex in $H$:
$$
	d_H = \sum_{ij \in E(H)} \frac{W_{ij} }{|V(H)|}.
$$
The {\it densest \kdc problem} concerns choosing a \kdcs of $K_N$ such that the sum of the densities of the subgraphs induced by the cliques is
maximized.
Given a \kdcs with vertex set composed of cliques $C_1, \dots, C_k$, the sum of the densities of the subgraphs induced by the cliques is equal to
\begin{equation}	\label{eq: density def}
	\sum_{i=1}^k d_{G(C_i)} = \sum_{i=1}^k \frac{\bv_i^T W \bv_i}{ \bv_i^T \bv_i },
\end{equation}
where $\bv_i$ is the characteristic vector of $C_i$.
In the special case that $C_1,\dots, C_k$ defines a partition of $V$ and $W_{ij} = 1 - \|\x^{(i)} - \x^{(j)}\|^2$ for a given set of $N$ vectors $\{\x^{(1)}, \dots, \x^{(N)}\}$ in $\R^n$
with maximum distance between any two points at most one, we have
\begin{align*}
	\sum_{\ell=1}^k d_{G(C_\ell) } &= \sum_{\ell=1}^k  \frac{1}{|C_\ell|} \rbra{ \sum_{i \in C_{\ell}} \sum_{j \in C_{\ell}} (1 - (\x^{(i)} - \x^{(j)})^T (\x^{(i)} - \x^{(j)}))  }  \\
		&= \sum_{\ell=1}^k \left(|C_\ell| - 2 \left( \sum_{i\in C_\ell} \|\x^{(i)}\|^2 - \sum_{i\in C_\ell } \sum_{j\in C_\ell} (\x^{(i)})^T \x^{(j)}  \right) \right) \\
		&= N -  2\sum_{\ell=1}^k  \sum_{i\in C_\ell}\|\x^{(i)} - \mathbf{c}^{(\ell)} \|^2,
\end{align*}
where $\mathbf{c}^{(\ell)} = \sum_{i\in C_\ell} \x^{(i)}/|C_\ell|$ is the center of the vectors assigned to $C_\ell$ for all $\ell=1, \dots, k$,
since $\sum_{\ell=1}^k |C_\ell| = N$ for this choice of $W$.
Here, and in the rest of the note,
$\|\x\| = \sqrt{\x^T \x}$ denotes the $\ell_2$ norm in $\R^n$.
For this choice of $W$, the {\it densest $k$-partition problem}, i.e., finding a partition $C_1,\dots, C_k$ of $V$ such that the sum of densities of the subgraphs
induced by $C_1,\dots, C_k$ is maximized,
is equivalent to finding the partition of $V$ such that the sum of the squared Euclidean distances
\begin{equation}	\label{eq: f def}
	f(\{\x^{(1)},\dots, \x^{(n)}\}, \{ C_1,\dots, C_k\} ) = \sum_{\ell=1}^k  \sum_{i\in C_\ell}\|\x^{(i)} - \mathbf{c}^{(\ell)} \|^2
\end{equation}
from each vector $\x^{(i)}$ to its assigned cluster center is minimized.
Unfortunately, minimizing  $f$ over all potential partitions of $V$ is NP-hard and, thus, so is the densest $k$-partition problem (see \cite{peng2007approximating}).
It should be noted that the complexity of the densest \kdcs problem is unknown, although the problem of minimizing $f$ over all \kdc subgraphs has the trivial solution of
assigning exactly one point to each cluster and setting all other points to be outliers.

If we let $X$ be the $N\times k$ matrix with $i$th column equal to $\bv_i/\|\bv_i\|$, we have
$
	\sum_{i=1}^k  d_{G(C_i)} = \tr(X^T W X).
$
We call such a matrix $X$ a {normalized $k$-partition matrix}.
That is, $X$ is a {\it normalized k-partition matrix} if the columns of $X$ are the normalized characteristic vectors of $k$ disjoint subsets of $V$.
We denote by $\npm(V,k)$ the set of all normalized $k$-partition matrices of $V$.
We should note that the term normalized $k$-partition matrix is a slight misnomer;
the columns of $X \in \npm(V,k)$ do not necessarily define a partition of $V$ into $k$ disjoint sets
but do define a partition of $V$ into the $k+1$ disjoint sets given by the columns
$X(:,1), \dots, X(:,k)$ of $X$ and their complement.
Using this notation, the densest \kdc problem may be formulated as the quadratic program
\begin{equation} \label{eq: KDC comb QP}
	\max \{ \tr(X^T W X) : X \in \npm(V,k) \}.
\end{equation}
Unfortunately, quadratic programs with combinatorial constraints are NP-hard in general.

The quadratic program~\eqref{eq: KDC comb QP} may be relaxed to a rank constrained semidefinite program using matrix lifting.
We replace each column $\x_i$ of $X$ with a rank-one semidefinite variable $\x_i \x_i^T$ to obtain the new decision variable
$
	\tilde X = \sum_{i=1}^k \x_i \x_i^T.
$	
The new variable $\tilde X$ has both rank and trace exactly equal to $k$ since the summands $\x_i\x_i^T$ are orthogonal rank-one matrices, each with nonzero eigenvalue equal to 1.
Moreover, since $[\x_i]_{j} = 1/\sqrt{r_i}$ for all $j\in C_i$ and all remaining components equal to $0$ where $r_i$ is equal to the number of nonzero entries of $\x_i$  , we have
$$
	\tilde X \e = \sum_{i=1}^k \x_i (\x_i^T \e) = \sum_{i=1}^k \sqrt{r_i} \x_i.
$$
Thus, the matrix $\tilde X$
has row sum equal to one for each vertex in the subgraph of $K_N$ defined by the columns of $X$ and zero otherwise.
Therefore, we may relax \eqref{eq: KDC comb QP} as the rank constrained semidefinite program
\begin{equation}	\label{E: w rank relax}
	\max \big\{\tr (WX) : X \e \le \e, \rank X = k, \tr X = k, X \ge 0, X \in \Sigma^N_+ \big\}
\end{equation}
Here $\Sigma^N_+$ denotes the cone of $N\times N$ symmetric positive semidefinite matrices and $\e$ denotes the all-ones vector in $R^{N}$.
The nonconvex program \eqref{E: w rank relax} may be relaxed further to a semidefinite program by ignoring the nonconvex
constraint $\rank(X) = k$:
\begin{equation}	\label{E: w trace relax}
	\max \big\{\tr( W X ) : X \e \le \e, \tr X = k, X \ge 0, X \in \Sigma^N_+ \big\}.
\end{equation}
Note that a \kdcs with vertex set composed of disjoint cliques $C_1,$ $\dots,$ $C_k$ defines a feasible solution of \eqref{E: w trace relax} with rank exactly equal to $k$
and objective value equal to \eqref{eq: density def}
by
\begin{equation}	\label{E: proposed sol}
	X^* = \sum_{i=1}^k \frac{\bv_i \bv_i^T}{\bv_i^T \bv_i},
\end{equation}
where $\bv_i$ is the characteristic vector of $C_i$ for all $i =1,\dots, k$.
This feasible solution is exactly the lifted solution corresponding to the cliques $\{C_1, \dots, C_k\}$.
This relaxation approach mirrors that for the planted $k$-disjoint-clique problem considered in \cite{AV2}.
In \cite{AV2}, entrywise nonnegativity constraints can be ignored for the sake  of computational efficiency due to explicit constraints
forcing all entries of a feasible solution corresponding to unadjacent nodes to be equal to 0.
Due to the lack of such constraints in \eqref{E: w trace relax}, the nonnegativity constraints are required to ensure that the optimal solution
of \eqref{E: w trace relax} is unique if the input data is sufficiently clusterable.
Indeed, suppose that 
$$
	W = \mat{ {cc}\e\e^T & 0 \\ 0 & \e\e^T },
$$
where $\e$ is the all-ones vector in $\R^n$.
Then both 
$$
	\frac{1}{n} \mat{{cc} \e\e^T & 0 \\ 0 &\e\e^T}  \mbox{\;\; and \;\; } \frac{1}{n} \mat{{cc}\e\e^T & -\e\e^T \\ -\e\e^T & \e\e^T }
$$
are positive semidefinite, have trace equal to $2$, row sums bounded above by $1$, and have objective value equal to $2n$.
Therefore, the nonnegativity constraints are necessary to distinguish between these two solutions.
We should also point out that the constraints of \eqref{E: w trace relax} are similar to those of the
semidefinite relaxation used to approximate the minimum sum of squared Euclidean distance partition  by Peng and Wei
in \cite{peng2007approximating}, although with different derivation.

The relaxation \eqref{E: w trace relax} may be thought of as a nuclear norm relaxation of \eqref{E: w rank relax}.
Indeed, since the eigenvalues and singular values of a positive semidefinite matrix are identical, every feasible solution $X$
satisfies
$
	\tr(X) = \sum_{i=1}^N \sigma_i(X) = \|X\|_*.
$
Moreover, since every feasible solution $X$ is symmetric and has row sums at most $1$, we have 
$
	\|X\|_1 = \|X\|_\infty \le 1
$
for every feasible $X$.
Here $\|\cdot\|_1$, $\|\cdot\|$, and $\|\cdot\|_\infty$ denote the matrix norms on $\R^{N\times N}$ induced by the $\ell_1$, $\ell_2$, and $\ell_\infty$ norms on $\R^N$
respectively.
This implies that every feasible $X$ satisfies $\|X\| \le 1$ since $\|X\| \le \sqrt{\|X\|_1 \|X\|_\infty}$ (see  \cite[Corollary~2.3.2]{GV}).
Since $\|X\|_*$ is the convex envelope of $\rank(X)$ on the set $\{X : \|X\|\le 1\}$ (see, for example, \cite[Theorem 2.2]{RFP}),
the semidefinite program \eqref{E: w trace relax} is exactly the relaxation of \eqref{E: w rank relax} obtained by ignoring the 
rank constraint and only constraining the nuclear norm of a feasible solution.
Many recent results 
have shown that the minimum rank solution of a set of linear equations $\A(X) = \b$
is equal to the minimum nuclear norm solution, under certain assumption on the linear operator $\A$.
We  would like to prove analogous results for the relaxation \eqref{E: w trace relax}.
That is, we would like to identify conditions on the input graph that guarantee recovery of the densest \kdcs by solving \eqref{E: w trace relax}.

Ideally, a clustering heuristic should be able to correctly identify the clusters in data that is known {\it a priori} to be clusterable.
In our graph theoretic model, this case corresponds to a graph $G_S=(V,E,W)$ admitting a \kdcs with very high weights on edges connecting nodes within the cliques
and relatively low weights on edges between different cliques.
We focus our attention on input instances for the densest \kdc problem that are constructed to possess this structure.
Let $K^*$ be a \kdcs of $K_N$ with vertex set composed of disjoint cliques $C_1, C_2, \dots, C_k$.
We consider random symmetric matrices $W \in \Sigma^N$ with entries sampled independently from one of two distributions $\Omega_1$, $\Omega_2$ as follows:
\begin{itemize}
	\item		
		For each $q=1,\dots, k$,
		the entries of each diagonal block $W_{C_q,C_q}$ are independently sampled from a probability distribution $\Omega_1$ satisfying
			$\E[W_{ij}] = \E[W_{ji}] = \alpha$
		and $W_{ij} \in [0,1]$ for all $i,j \in C_q$.		
	\item
		All remaining entries of $W$ are independently sampled from a probability distribution $\Omega_2$ satisfying		
			$\E[ W_{ij} ] = \E[W_{ji} ] = \beta$
		and $W_{ij} \in [0,1]$ for all $(i,j) \in (V\times V )\setminus \cup_{q=1}^k (C_q\times C_q)$.
\end{itemize}
That is,  we  sample the random variable $W_{ij}$ from the probability distribution $\Omega_1$ with
mean $\alpha$ if the nodes $i,j$ are in the same planted clique; otherwise, we sample $W_{ij}$ from the distribution $\Omega_2$ with mean $\beta$.
We say that such random matrices $W$ are sampled from the {\it planted cluster model}.
We should note the planted cluster model is a generalization of the planted \kdcs model considered in \cite{AV2}, as well as the stochastic block/probabilistic cluster model considered in
\cite{jalali2011clustering, oymak2011finding, rohe2011spectral}. Indeed, the stochastic block model is generated by independently adding edges within planted dense subgraphs with probability $p$ and 
independently adding edges between cliques with probability $q$ for some $p > q$. The planted \kdcs model is simply the stochastic block model in the special case that $p = 1$.
Therefore, choosing $\Omega_1$ and $\Omega_2$ to be  Bernoulli distributions with probabilities of success $p$ and $q$, respectively,  yields $W$ sampled from the stochastic block model.		

The following theorem describes which partitions $\{C_1, C_2, \dots, C_{k+1}\}$ of $V$ yield random symmetric matrices $W$ drawn from the planted cluster model
such that the corresponding planted \kdcs $K$ is the densest \kdcs and can be found with high probability by
solving \eqref{E: w trace relax}.

\begin{theorem} \label{T: weighted kdc results}
	Suppose that the vertex sets $C_1, \dots, C_k$ define a $k$-disjoint-clique subgraph $K^*$ of the complete graph $K_N = (V, E)$
	on $N$ vertices and let $C_{k+1} := V \setminus (\cup^k_{i=1} C_i).$
	Let $r_i := |C_i|$ for all $i=1,\dots, k+1$, and let $\hat r = \min_{i=1,\dots, k} r_i$.
	Let $W \in \Sigma^N$ be a random symmetric matrix sampled from the planted cluster model according to distributions $\Omega_1$ and $\Omega_2$
	with means $\alpha$ and $\beta$, respectively, satisfying 
	\begin{equation} \label{A: alpha beta ratio}
		\gamma = \gamma(\alpha, \beta, r)  := \alpha ( 1 + \delta_{0, r_{k+1}})  - 2 \beta > 0,
	\end{equation}
	where $\delta_{i,j}$ is the Kronecker delta function defined by $\delta_{i,j} = 1$ if $i=j$ and $0$ otherwise.
	Let $X^*$ be the feasible solution for \eqref{E: w trace relax} corresponding to $C_1, \dots, C_k$ defined
	by \eqref{E: proposed sol}.		
	Then there exist scalars $c_1, c_2, c_3 > 0$ such that if
	\begin{equation}	\label{A: wkdc guarantee bound}
		c_1  \sqrt{N} + c_2 \sqrt{k r_{k+1}} + c_3 r_{k+1} \le \gamma\hat r
	\end{equation}
	then $X^*$ is the unique optimal solution for \eqref{E: w trace relax}, and $K^*$  is the 
	unique maximum density $k$-disjoint-clique
	subgraph of $K_N$ corresponding to $W$ with \pteto{\hat r}.
\end{theorem}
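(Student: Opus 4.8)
The plan is to prove optimality and uniqueness of $X^*$ by exhibiting an explicit dual certificate for \eqref{E: w trace relax} and then showing that the planted‑cluster randomness of $W$ makes this certificate valid with the stated probability. First I would write down the Lagrangian dual of \eqref{E: w trace relax}: with multipliers $\lambda\ge 0$ for $X\e\le\e$, $\mu\in\R$ for $\tr X=k$, a symmetric $\Phi\ge 0$ (entrywise) for $X\ge 0$, and $S\succeq 0$ for $X\succeq 0$, it is
\[
\min\bigl\{\,\e^{T}\lambda+k\mu \;:\; S=\mu I+\tfrac12(\lambda\e^{T}+\e\lambda^{T})-W-\Phi,\ \ \lambda\ge 0,\ \Phi\ge 0,\ S\succeq 0\,\bigr\}.
\]
Slater's condition holds when $k<N$, so strong duality and complementary slackness are available; it therefore suffices to produce a dual‑feasible $(\lambda,\mu,\Phi,S)$ with $\e^{T}\lambda+k\mu=\tr(WX^*)$. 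For \emph{uniqueness} I would additionally require $\Null S=\spn\{\bv_1,\dots,\bv_k\}$ (equivalently $S\succ 0$ on the range of $I-X^*$), $\lambda_i>0$ for $i\in\cup_{\ell\le k}C_\ell$, and $\Phi_{ij}>0$ whenever $i,j$ lie in different blocks of the partition $\{C_1,\dots,C_{k+1}\}$: for any other optimal $\bar X$, complementary slackness gives $S\bar X=0$, $\Phi_{ij}\bar X_{ij}=0$, and $\lambda_i(1-(\bar X\e)_i)=0$; the first forces $\mathrm{range}\,\bar X\subseteq\spn\{\bv_1,\dots,\bv_k\}$, the second then forces $\bar X=\sum_{\ell\le k}b_\ell\bv_\ell\bv_\ell^{T}/r_\ell$, and $\tr\bar X=k$ together with $(\bar X\e)_i=1$ on the cliques forces every $b_\ell=1$, i.e.\ $\bar X=X^*$.

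Next I would construct the certificate. Fix a scalar $\mu>0$, to be pinned down later. Imposing $S\bv_\ell=0$ on the rows indexed by $C_\ell$, where $\Phi$ will vanish, determines $\lambda$: for $\ell\le k$, $\lambda_i=r_\ell^{-1}\bigl(2(W\bv_\ell)_i-\mu-\bv_\ell^{T}W\bv_\ell/r_\ell\bigr)$ for $i\in C_\ell$, and $\lambda_i=0$ for $i\in C_{k+1}$; summing over $C_\ell$ gives $\bv_\ell^{T}\lambda=\bv_\ell^{T}W\bv_\ell/r_\ell-\mu$, whence $\e^{T}\lambda+k\mu=\tr(WX^*)$ automatically, so the objective values agree. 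Imposing $S\bv_\ell=0$ on the remaining rows forces the prescribed row/column sums $\sum_{j\in C_\ell}\Phi_{ij}=h^{(\ell)}_i:=\tfrac12\bigl(r_\ell\lambda_i+\bv_\ell^{T}\lambda\bigr)-(W\bv_\ell)_i$ on the off‑diagonal blocks of $\Phi$; since $\sum_{i\in C_m}h^{(\ell)}_i$ is symmetric in $(\ell,m)$, these two‑sided requirements are consistent, and I can take a positive rank‑one fill — $\Phi_{ij}$ proportional to $h^{(m)}_ih^{(\ell)}_j$ on $C_\ell\times C_m$ ($\ell\ne m$, both $\le k$), $\Phi_{ij}=h^{(\ell)}_j/r_\ell$ on $C_\ell\times C_{k+1}$ — together with $\Phi=0$ on every diagonal block (including $C_{k+1}\times C_{k+1}$). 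With $S$ then defined by the dual equation, three things remain: $\lambda\ge 0$, $\Phi\ge 0$, and $S\succeq 0$ with kernel $\spn\{\bv_1,\dots,\bv_k\}$.

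For the positivity conditions I would compute the block‑constant means $E[\lambda_i]=\alpha-\mu/r_\ell$, and $E[h^{(\ell)}_i]=r_\ell(\alpha-\beta)-\mu(\tfrac{r_\ell}{2r_m}+\tfrac12)$ for $i\in C_m$ with $m\le k$, while $E[h^{(\ell)}_i]=r_\ell(\tfrac{\alpha}{2}-\beta)-\tfrac\mu2$ for $i\in C_{k+1}$ — here the factor $\alpha$ versus $2\beta$, and hence the Kronecker correction of \eqref{A: alpha beta ratio} in the outlier‑free case $r_{k+1}=0$, enter precisely because there are then no $C_{k+1}$‑rows to control — and observe that all of these are bounded away from $0$ provided $0<\mu<\gamma\hat r$ (with $\mu$ not too small). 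Scalar Bernstein/Hoeffding bounds for the sums of independent bounded entries of $W$ making up $\lambda$ and the $h^{(\ell)}_i$, union‑bounded over the $O(N^{2})$ entries, then give $\lambda>0$ and $\Phi>0$ off the cliques with failure probability at most $e^{-c\hat r}$ for some $c>0$. The main obstacle is the semidefiniteness of $S$. Since $S\bv_\ell=0$ and $S$ is symmetric, $S=QSQ$ with $Q=I-X^*$, so I must show $y^{T}Sy>0$ for every nonzero $y$ with $y\perp\bv_1,\dots,\bv_k$. Expanding $y^{T}Sy=\mu\|y\|^{2}+(\lambda^{T}y)(\e^{T}y)-y^{T}Wy-y^{T}\Phi y$ and using $y\perp\bv_\ell$: (i) $\e^{T}y$ is the sum of the entries of $y$ on $C_{k+1}$, so $|\e^{T}y|\le\sqrt{r_{k+1}}\|y\|$, and $E\lambda\in\spn\{\bv_1,\dots,\bv_k\}$, so $\lambda^{T}y=(\lambda-E\lambda)^{T}y$ with $\|\lambda-E\lambda\|_2=O(\sqrt{k})$ with high probability (its coordinate variances sum to $O(\alpha k)$); (ii) $y^{T}(E W)y=\beta(\e^{T}y)^{2}\le\beta r_{k+1}\|y\|^{2}$, while $\|W-EW\|=O(\sqrt N)$ with high probability by standard spectral bounds for symmetric matrices with independent bounded entries; (iii) $y^{T}\Phi y=y^{T}(Q\Phi Q)y$, and $\|Q\Phi Q\|=o(\sqrt N)$, because both the (essentially block‑constant) expectation of $\Phi$ and the leading‑order randomness of the rank‑one fill on each off‑diagonal block — the latter of the form $\mathbf 1_{C_\ell}a^{T}+b\,\mathbf 1_{C_m}^{T}$ for small random $a,b$ — lie in the range of $X^*$ on one side or the other and are removed by $Q$. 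Collecting, $y^{T}Sy\ge\bigl(\mu-\beta r_{k+1}-c_1'\sqrt N-c_2'\sqrt{kr_{k+1}}-o(\sqrt N)\bigr)\|y\|^{2}$, which is strictly positive once $\mu$ is chosen in the interval $\bigl(\beta r_{k+1}+c_1'\sqrt N+c_2'\sqrt{kr_{k+1}},\,\gamma\hat r\bigr)$, an interval that is nonempty thanks to \eqref{A: wkdc guarantee bound}. I expect step (iii) — the careful first‑order expansion of the $h^{(\ell)}_i$ around their means showing that all $O(1)$‑per‑entry fluctuations of $\Phi$ fall in the subspace removed by $Q$ — to be the technically heaviest part, alongside the spectral‑norm control of $W-EW$.

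Finally I would union‑bound the finitely many concentration events — the operator‑norm bounds failing with probability at most $e^{-c'N}$, the entrywise and vector bounds with probability at most $e^{-c\hat r}$ — to conclude that, with \pteto{\hat r}, $X^*$ is the unique optimal solution of \eqref{E: w trace relax}. Since $X^*$ is feasible for \eqref{E: w rank relax} with $\rank X^*=k$, and \eqref{E: w trace relax} relaxes both \eqref{E: w rank relax} and \eqref{eq: KDC comb QP} while attaining the same objective value at $X^*$, it is also the unique optimum of \eqref{eq: KDC comb QP}; and because distinct $k$-disjoint-clique subgraphs produce distinct matrices via \eqref{E: proposed sol}, the subgraph $K^*$ is the unique maximum‑density \kdcs of $K_N$ corresponding to $W$.
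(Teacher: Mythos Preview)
Your overall strategy---exhibit a KKT certificate $(\mu,\lambda,\Phi,S)$, force $S\bv_\ell=0$ to determine $\lambda$, then show $\lambda>0$, $\Phi>0$ off the diagonal blocks, and $S\succ 0$ on $\spn\{\bv_\ell\}^\perp$ via concentration---is exactly the paper's. Your $\lambda$ coincides with the paper's (up to the harmless factor $2$ in your dual convention), and your uniqueness argument via strict complementarity is in fact cleaner than the paper's, which instead derives an ad hoc inequality $r_s\,\e^TW_{C_q,C_q}\e>r_q\,\e^TW_{C_q,C_s}\e$ and reaches a contradiction.

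The genuine difference is in the construction of $\Phi$ (the paper's $\eta$). You propose a multiplicative rank-one fill $\Phi_{C_\ell,C_m}\propto h^{(m)}(h^{(\ell)})^T$; the paper instead takes the \emph{additive} form $\eta_{C_q,C_s}=E[\lambda_{C_q}\e^T+\e\lambda_{C_s}^T-W_{C_q,C_s}]+\y^{q,s}\e^T+\e(\z^{q,s})^T$, where $\y^{q,s},\z^{q,s}$ are solved explicitly from the linear system imposed by $S\bv_\ell=0$ (via Sherman--Morrison). The payoff of the paper's choice is that every term of $\eta_{C_q,C_s}$ carries a factor of $\e$ on at least one side, so $y_{C_q}^T\eta_{C_q,C_s}y_{C_s}=0$ \emph{exactly} whenever $y_{C_q}\perp\e$ and $y_{C_s}\perp\e$; consequently $y^TSy=y^T(\mu I+\tilde S)y$ with $\tilde S$ depending only on $W-E[W]$ and the $C_{k+1}$ blocks, and one only needs $\|\tilde S\|\le\mu$. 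Your rank-one $\Phi$ does not share this property: after killing the block-constant mean and the first-order pieces $\e a^T+b\e^T$, you are left with a genuine second-order residual $c_{\ell m}\,\delta h^{(m)}(\delta h^{(\ell)})^T$ on every off-diagonal block. This residual is not $o(\sqrt N)$ as you assert---for instance the $\tfrac12 r_m\,\delta\lambda_{C_\ell}$ component of $\delta h^{(m)}$ assembles globally into a multiple of $\delta\lambda\,\delta\lambda^T$, contributing $\|\delta\lambda\|^2=O(k)$ to $\|Q\Phi Q\|$---but under the hypothesis \eqref{A: wkdc guarantee bound} one has $k\le N/\hat r=O(\sqrt N)$, so the term can still be absorbed into $c_1\sqrt N$. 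Your route is therefore viable, but step~(iii) is heavier than you indicate; the paper's additive parametrization sidesteps it entirely. One more small point: your positivity window for $\mu$ is stated as $(0,\gamma\hat r)$, but the constraint $E[h^{(\ell)}_i]>0$ for $i\in C_m$ actually forces $\mu<(\alpha-\beta)\hat r$, which is strictly smaller than $\gamma\hat r$ when $r_{k+1}=0$; the paper handles this by taking $\mu=\epsilon\gamma\hat r$ for sufficiently small $\epsilon$.
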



Note that the condition \eqref{A: alpha beta ratio} implies that $\alpha > \beta$ if $r_{k+1} = 0$ and $\alpha > 2 \beta$ otherwise. That is, if  $\{C_1,\dots, C_k\}$
defines a  partition of $V$ then the restriction that $\alpha > 2 \beta$ can be relaxed to $\alpha > \beta$.
On the other hand,
the condition \eqref{A: wkdc guarantee bound}
cannot be satisfied unless $N = O(\hat r^2)$ and $r_{k+1} = O(\hat r)$.
We now provide a few examples of $r_1, \dots, r_k$ satisfying the hypothesis of Theorem~\ref{T: weighted kdc results}.

\begin{itemize}
\item 
	Suppose that we have $k$ cliques $C_1, \dots, C_k$ of size $r_1 = r_2 = \dots = r_k = N^{\epsilon}$.
	Then \eqref{A: wkdc guarantee bound} implies that we may recover the $k$-disjoint-clique subgraph corresponding
	to $C_1, \dots, C_k$ if  $N^{\epsilon} \ge \Omega(N^{1/2})$.
	Since the cliques $C_1,\dots, C_k$ are disjoint 
	and contain $\Omega(N)$ nodes, we must have $\epsilon \ge 1/2$. Therefore, our heuristic may recover $O(N^{1/2})$ planted cliques of size $N^{1/2}$.
\item 
	On the other hand, we may have cliques of different sizes.
	For example, suppose that we wish to recover $k_1$ cliques of size $N^{3/4}$ and $k_2$ smaller 
	cliques of size $N^{1/2}$. Then the right-hand side of \eqref{A: wkdc guarantee bound} must be at least
	$
		\Omega\big( \sqrt{N} + \sqrt{(k_1 + k_2) r_{k+1} } + r_{k+1}\big).
	$
	Therefore, we may recover the planted cliques provided that $k_1 = O(N^{1/4})$, $k_2 = O(N^{1/2})$, and $r_{k+1} = O(N^{1/2})$.
\end{itemize}

Although we consider a more general model for clustered data, our recovery guarantee agrees (up to constants) with those existing in the literature.
In particular, the bound on the minimum size of the planted clique recoverable by the relaxation \eqref{E: w trace relax}, $\hat r = \Omega(N^{1/2})$, provided by Theorem~\ref{T: weighted kdc results}
 matches that given in \cite{jalali2011clustering, oymak2011finding}.
However, among the existing recovery guarantees in the literature, few consider noise in the form of diversionary nodes.
As a consequence of our more general model, the relaxation~\eqref{E: w trace relax} is exact for input graphs containing up to $O(\hat r)$ noise nodes,  fewer than the bound, $O(\hat r^2)$, provided by \cite[Theorem~4.5]{AV2}.

\section{A semidefinite relaxation of the densest \kdb problem}
\label{sec: B results}
Given a bipartite graph $G=((U,V),E)$, a pair of disjoint independent subsets $U' \subseteq U$, $V' \subseteq V$ is a {\it biclique} of
$G$ if the subgraph of $G$ induced by $(U',V')$ is complete bipartite. That is, $(U',V')$ is a biclique of $G$ if $uv \in E$ for all $u \in U', v\in V'$.
A {\it \kdb subgraph} of $G$ is a subgraph of $G$ with vertex set composed of $k$ disjoint bicliques of $G$.
Let $K_{M,N} = ((U,V),E, W)$ be a weighted complete bipartite graph with vertex sets $U = \{1,2,\dots, M\}$, $V = \{1,\dots, N\}$
with matrix of edge weights $W \in [0,1]^{U\times V}$.
We are interested in identifying the densest \kdb subgraph of $K_{M,N}$ with respect to $W$.
We define the density of a subgraph $H = (U',V',E')$ of $K_{M,N}$ to be the total edge weight incident at each vertex divided by the square root of the number of edges from $U'$ to $V'$:
\begin{equation}	\label{eq: b density def}
	d_H =  \frac{1}{  \sqrt{|E'|} }\sum_{u\in U', v \in V'} {W_{uv}}.
\end{equation}
Note that the density of $H$, as defined by \eqref{eq: b density def}, is not necessarily equal to the average edge weight incident at a vertex of $H$, since the square root of the number of edges
is not equal to the total number of vertices
if $|U'| \neq |V'|$ or $H$ is not complete.
The goal of the {\it densest \kdb problem} is to identify a set of $k$ disjoint bicliques of $K_{M,N}$
such that the sum of the densities of the complete subgraphs induced by these bicliques is maximized. That is, we want to find a set of $k$ disjoint bicliques, with characteristic vectors
$ (\u_1, \bv_1), \dots, (\u_k, \bv_k)$, maximizing the sum
\begin{equation} \label{eq: WKDB objective}
	\sum_{i=1}^k \frac{\u_i^T W \bv_i}{\|\u_i \| \| \bv_i \|}.
\end{equation}
As in our analysis of the densest \kdc problem, this problem may be posed as the nonconvex quadratic program
\begin{equation} 	\label{eq: bicluster QP}
	\max \{ \tr ( X^T W Y) :  X \in \npm(U), \; Y \in \npm(V) \}.
\end{equation}
By letting $Z = (X^T, \, Y^T)^T (X^T, \, Y^T)$, we have $\tr(X^T W Y) = \frac{1}{2} \tr(\tilde W Z)$, where
$$
	\tilde W = \mat{{cc} 0 & W \\ W^T & 0 }.
$$
Using this change of variables, we relax to the rank constrained semidefinite program
\begin{equation}	\label{eq: KDB rank form}
\begin{array}{ll}
	\max \;\; & \frac{1}{2}\tr( \tilde W Z ) \\
	\st \;\;	& Z_{U,U} \e \le \e, \;\;\; Z_{V,V} \e \le \e, \\
			& \tr ( Z_{U,U} ) = k, \;\;\; \tr ( Z_{V,V} ) = k ,\\
			& \rank (Z_{U,U} ) = k, \;\; \rank ( Z_{V,V} ) = k, \\
			& Z \ge 0, \;\;\;  Z \in \Sigma^{M+N}_+,
\end{array}
\end{equation}
where $Z_{U,U}$ and $Z_{V,V}$ are the blocks of $Z$ with rows and columns indexed by $U$ and $V$ respectively.
Ignoring the nonconvex rank constraints  yields the semidefinite relaxation
\begin{equation}	\label{eq: WKDB relaxation} 
\begin{array}{ll}
	\max \;\; & \frac{1}{2}\tr( \tilde W Z ) \\
	\st \;\;	& Z_{U,U} \e \le \e, \;\;\; Z_{V,V} \e \le \e, \\
			& \tr ( Z_{U,U} ) = k, \;\;\; \tr ( Z_{V,V} ) = k ,\\			
			&  Z \ge 0, \;\;\;  Z \in \Sigma^{M+N}_+.
\end{array}
\end{equation}
As in our analysis of the densest \kdc problem, we would like to identify sets of program instances of the \kdb problem that may be solved using the semidefinite relaxation \eqref{eq: WKDB relaxation}.
As before, we consider input graphs where it is known {\it a priori} that a \kdbs with large edge weights, relative to the edges of its complement, exists.
We consider random program instances generated as follows. Let $G^*$ be a \kdbs of $K_{M,N}$ with vertex set composed of the disjoint bicliques $(U_1, V_1), \dots, (U_k, V_k)$.
We construct a random matrix $W \in \R^{M\times N}_+$ with entries sampled independently from one of two distributions $\Omega_1, \Omega_2$ as follows.
\begin{itemize}
	\item If $u \in U_i$, $v \in V_i$ for some $i\in \{1,\dots, k\}$, then we sample $W_{uv}$ from the distribution $\Omega_1$, with mean $\alpha$. If $u$ and $v$ are in different bicliques of $K^*$, then we
		sample $W_{uv}$ according to the probability distribution $\Omega_2$, with mean $\beta < \alpha$.
	\item
		The probability distributions $\Omega_1, \Omega_2$ are chosen such that $u \in U, v\in V$, $0 \le W_{uv} \le 1$.		
\end{itemize}
We say that such $W$ are sampled from the {\it planted bicluster model}. Note that  $G^*$  defines a feasible solution for \eqref{eq: WKDB relaxation} by
\begin{equation}	\label{eq: WKDB sol}
	Z^* = \sum_{i=1}^k \vect{ \u_i/\|\u_i\| \\ \bv_i / \|\bv_i \| } \vect{ \u_i/\|\u_i\| \\ \bv_i / \|\bv_i \| }^T,
\end{equation}
where $\u_i, \bv_i$ are the characteristic vectors of $U_i$ and $V_i$, respectively, for all $i=1,\dots,k$.
Moreover, $Z^*$ has objective value equal to \eqref{eq: WKDB objective}.
The following theorem describes which partitions $\{U_1,\dots,U_k\}$ and $\{V_1, \dots, V_k\}$ of $U$ and $V$ yield random matrices $W$ 
drawn from the planted bicluster model such that $Z^*$ is the unique optimal solution of the semidefinite relaxation \eqref{eq: WKDB relaxation}
and  $G^*$ is the unique densest \kdb subgraph.

\begin{theorem}	\label{thm: biclustering recovery guarantee}
	Suppose that the vertex sets $(U_1, V_1), \dots, (U_k, V_k)$ define a \kdbs $K^*$ of the complete bipartite graph $K_{M,N} = ((U,V), E)$.
	Let $U_{k+1} := U \setminus (\cup_{i=1}^k U_i)$ and $V_{k+1} := V \setminus (\cup_{i=1}^k V_i)$.
	Let  $m_i = |U_i|$ and $n_i = |V_i|$ for all $i = 1, \dots, k+1$ and $\hat n:= \min_{i=1,\dots,k} n_i$.	
	Let $Z^*$ be the feasible solution for \eqref{eq: WKDB relaxation} corresponding to $K^*$ given by \eqref{eq: WKDB sol}.
	Let $W \in \R^{M\times N}_+$ be a random matrix sampled from the planted bicluster model
	according to distributions $\Omega_1$ and $\Omega_2$ with means $\alpha, \beta$ satisfying 
	\begin{equation}	\label{eq: a twice b}
		\gamma = \gamma (\alpha, \beta, m, n) := \alpha (1 + \delta_{0,m_{k+1}}\delta_{0, n_{k+1} } ) - 2\beta > 0.
	\end{equation}	
	Suppose that there exist scalars $\{\tau_1,\dots, \tau_{k+1}\}$ such that
	$m_i = \tau_i^2 n_i$ for all $i \in \{1,\dots, k+1\}$ and
	\begin{equation}	\label{eq: abd assumption}
		\alpha \tau_i > \beta \tau_j
	\end{equation}
	for all $i,j \in \{1,\dots, k+1\}$.
	Then there exist scalars $c_1, c_2 > 0$ depending only on $\alpha, \beta,$ and $\{\tau_1,\dots, \tau_{k+1}\}$ such that if
	\begin{equation}	\label{eq: guarantee ineq}
		c_1 \rbra{ \sqrt{k} +\sqrt{n_{k+1}} + 1} \sqrt{N}  + \beta \tau_{k+1} n_{k+1} \le c_2 \gamma \hat n
	\end{equation}
	then $Z^*$ is the unique optimal solution of \eqref{eq: WKDB relaxation} and
	$G^*$ is the unique maximum density \kdbs with respect to $W$ with probability tending exponentially to
	$1$ as $\hat n$ tends to $\infty$.	
\end{theorem}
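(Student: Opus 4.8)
The plan is to certify optimality and uniqueness of $Z^*$ for \eqref{eq: WKDB relaxation} by exhibiting an explicit dual (KKT) certificate, and then to deduce that $G^*$ is the unique densest \kdbs from the observation that every \kdbs of $K_{M,N}$ induces a feasible point of \eqref{eq: WKDB relaxation} whose objective value equals its total density. Forming the Lagrangian of \eqref{eq: WKDB relaxation}, the row-sum constraints $Z_{U,U}\e\le\e$ and $Z_{V,V}\e\le\e$ carry nonnegative multipliers $\mu\in\R^M$ and $\nu\in\R^N$, the trace constraints carry scalars $\lambda_U,\lambda_V$, the entrywise constraint $Z\ge 0$ carries a nonnegative matrix $\Phi$, and $Z\succeq 0$ carries $S\succeq 0$. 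Conic duality reduces the theorem to producing $(\mu,\nu,\lambda_U,\lambda_V,\Phi,S)$ satisfying the stationarity identity obtained by differentiating the Lagrangian in $Z$, the complementary slackness conditions ($\mu_u=0$ for $u\in U_{k+1}$, $\nu_v=0$ for $v\in V_{k+1}$, $\langle\Phi,Z^*\rangle=0$, $SZ^*=0$), and the feasibility conditions $\Phi\ge 0$, $S\succeq 0$; for \emph{uniqueness} I would strengthen these to $\Phi>0$ strictly off the support of $Z^*$ and $\Null S=\mathrm{range}(Z^*)$.

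I would construct the certificate as follows. Since $Z^*$ has diagonal-block row sums equal to $1$ on each core set $U_i,V_i$ ($i\le k$) and $0$ on the outlier sets $U_{k+1},V_{k+1}$, and $\tr(Z^*_{U,U})=\tr(Z^*_{V,V})=k$, complementary slackness permits $\mu,\nu$ to be supported on $\cup_{i\le k}U_i$ and $\cup_{i\le k}V_i$; I take $\mu$ constant on each $U_i$ and $\nu$ constant on each $V_i$ and choose these constants together with $\lambda_U,\lambda_V$ so that the stationarity identity holds on the diagonal blocks $S_{U,U}$ and $S_{V,V}$. This is a small linear system in which the aspect ratios $\tau_i=\sqrt{m_i/n_i}$ (available thanks to $m_i=\tau_i^2 n_i$) enter precisely so that the resulting quantities carry the correct sign and magnitude of order $\gamma\hat n$ — here the conditions $\gamma>0$ of \eqref{eq: a twice b} and $\alpha\tau_i>\beta\tau_j$ of \eqref{eq: abd assumption} are used. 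The matrix $S$ is then defined by the remaining stationarity equations, with $\Phi$ chosen to vanish on the support of $Z^*$ and to equal a convenient nonnegative deterministic pattern off it. Substituting $W=\E W+(W-\E W)$ splits $S=\bar S+R$ with $\bar S$ deterministic (depending only on $\alpha,\beta,\{\tau_i\}$, the sizes $m,n$, and the multipliers) and $R$ mean-zero and linear in $W-\E W$; by construction $\bar S$ and $R$ each annihilate $\mathrm{range}(Z^*)$, so $SZ^*=0$, while $\langle\Phi,Z^*\rangle=0$ by the choice of $\Phi$.

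The analytic heart is then two estimates, each holding with probability tending exponentially to $1$ as $\hat n\to\infty$. First, a deterministic spectral lower bound: $\bar S\succeq c\,\gamma\hat n\cdot P$, where $P$ is the orthogonal projection onto $(\mathrm{range}(Z^*))^\perp$ and $c>0$ depends only on $\alpha,\beta,\{\tau_i\}$; this is an eigenvalue analysis of a block matrix with $k+1$ block types, and is exactly where $\gamma>0$ together with $\alpha\tau_i>\beta\tau_j$ force a strictly positive spectral gap of the stated order. Second, a random-matrix bound: $\|R\|\le c_1(\sqrt k+\sqrt{n_{k+1}}+1)\sqrt N$, obtained by splitting $R$ into its $(U,U)$, $(V,V)$, and $(U,V)$ blocks and applying a spectral-norm estimate for random matrices with independent, mean-zero entries in $[-1,1]$; the $\sqrt k\,\sqrt N$ term accumulates from the $k$ diagonal biclique blocks entering the multiplier terms and the $\sqrt{n_{k+1}}\,\sqrt N$ term from the outlier rows and columns. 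Combining these with \eqref{eq: guarantee ineq} gives $S=\bar S+R\succeq 0$ with $\Null S=\mathrm{range}(Z^*)$; a parallel scalar estimate, union-bounded over entries, shows each entry of $\Phi$ off the bicliques of $K^*$ is at least $c\,\gamma\hat n-\beta\tau_{k+1}n_{k+1}$ minus a fluctuation of order $(\sqrt k+\sqrt{n_{k+1}}+1)\sqrt N$, hence strictly positive, again by \eqref{eq: guarantee ineq}. Feasibility and complementary slackness then certify that $Z^*$ is optimal; for uniqueness, any other optimum $\hat Z$ satisfies $\langle S,\hat Z\rangle=0$, forcing $\mathrm{range}(\hat Z)\subseteq\Null S=\mathrm{range}(Z^*)$, and $\langle\Phi,\hat Z\rangle=0$, forcing $\hat Z$ to be supported on the bicliques of $K^*$; the row-sum and trace constraints of \eqref{eq: WKDB relaxation} then force $\hat Z=Z^*$. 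Finally, any \kdbs of $K_{M,N}$ induces a feasible point of \eqref{eq: WKDB relaxation} with objective value equal to its density, distinct subgraphs inducing distinct feasible points, so uniqueness of $Z^*$ makes $G^*$ the unique maximum density \kdbs with respect to $W$.

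I expect the deterministic spectral lower bound $\bar S\succeq c\,\gamma\hat n\cdot P$ to be the main obstacle: $\bar S$ is a structured block matrix with $k+1$ block types and unequal biclique sizes and aspect ratios, and one must show that the only directions along which $\bar S$ fails to be positive definite are exactly those in $\mathrm{range}(Z^*)$, uniformly in all the size parameters — this is the step that genuinely consumes the hypotheses $m_i=\tau_i^2 n_i$ and $\alpha\tau_i>\beta\tau_j$ and fixes the admissible constants. Ensuring that $c_1,c_2$ depend only on $\alpha,\beta,\{\tau_i\}$, and arranging the blockwise random-matrix bound to produce exactly the factor $\sqrt k+\sqrt{n_{k+1}}+1$ rather than a cruder estimate, is the remaining bookkeeping.
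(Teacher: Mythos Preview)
Your high-level strategy --- build an explicit KKT certificate, split into a deterministic ``signal'' part and a mean-zero ``noise'' part, control the latter by random-matrix bounds --- matches the paper. But the specific construction you propose cannot work, and the gap is exactly at the sentence ``by construction $\bar S$ and $R$ each annihilate $\mathrm{range}(Z^*)$.''

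You take the row-sum multipliers $\mu,\nu$ \emph{constant} on each $U_i,V_i$ and take $\Phi$ to be a \emph{deterministic} nonnegative pattern off the support of $Z^*$. With these choices, the only place $W$ enters $S$ is through the $-W$ block of the stationarity identity; hence your random part $R$ has $(U_q,V_q)$-block equal to $-(W_{U_q,V_q}-\alpha\e\e^T)$. Complementary slackness for the semidefinite constraint requires $SZ^*=0$ \emph{exactly}, which on the $q$th biclique forces $S_{U_q,C_q}\binom{\sqrt{n_q}\,\e}{\sqrt{m_q}\,\e}=0$. The $W_{U_q,V_q}\e$ contribution is a genuinely random vector in $\R^{m_q}$, and since $\Phi$ (your $\eta$) must vanish on the $(C_q,C_q)$ block by complementary slackness with $Z^*>0$ there, nothing in your certificate can absorb it. Constant-on-blocks $\mu$ simply cannot cancel $W_{U_q,V_q}\e-\alpha n_q\e$, so $SZ^*\neq 0$ for almost every realization of $W$.

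The paper's fix is precisely to make the multipliers data-dependent: $\lambda_{U_q}$ and $\phi_{V_q}$ are explicit affine functions of the random row sums $W_{U_q,V_q}\e$ and $W_{U_q,V_q}^T\e$ (obtained by solving the linear systems that $SZ^*=0$ imposes on the diagonal biclique blocks, via Sherman--Morrison), and $\eta_{C_q,C_s}$ for $q\neq s$ carries random correction vectors $\y^{q,s},\z^{q,s}$ solving the linear systems imposed by $SZ^*=0$ on the off-diagonal blocks. Only after this does one split $S$; the resulting ``deterministic'' piece is not $\succeq c\gamma\hat n\,P$ as you conjecture, but rather contributes the trace-multiplier term $\epsilon\gamma\hat n$ minus a loss of order $\sqrt{r_{k+1}N}$ coming from the $(k{+}1)$st block, and the ``random'' pieces are controlled by Hoeffding (for the entrywise nonnegativity of $\lambda,\phi,\eta$, with fluctuations of order $\hat n^{-1/4}$, not $\gamma\hat n$) and by Geman's bound (for $\|S_1\|$). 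Your uniqueness argument also needs repair: once $\mathrm{range}(\hat Z)\subseteq\mathrm{range}(Z^*)$, the row-sum and trace constraints alone do \emph{not} pin down $\hat Z$; the paper instead uses a separate density comparison $n_s\e^T W_{U_q,V_q}\e>n_q\e^T W_{U_q,V_s}\e$, verified by concentration, to rule out any alternative optimum.
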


For example, Theorem~\ref{thm: biclustering recovery guarantee} implies that $O(N^{1/3})$ bicliques of size $\hat m = \hat n = N^{2/3}$ can be recovered
from a graph sampled from the planted bicluster model with up to $O(N^{1/3})$ diversionary nodes
by solving \eqref{eq: WKDB relaxation}.

\section{Proof of Theorem~\ref{T: weighted kdc results}}
\label{sec: kdc proof}
This section comprises a proof of Theorem~\ref{T: weighted kdc results}.
The proof of Theorem~\ref{thm: biclustering recovery guarantee} is essentially identical to that of Theorem~\ref{T: weighted kdc results},
although with some modifications made to accommodate the different relaxation and lack of symmetry
of the weight matrix $W;$
an outline of the proof of Theorem~\ref{thm: biclustering recovery guarantee} is given in Section~\ref{sec: kdb proof}.


\subsection{Optimality Conditions}
\label{sec: w opt conds}
We begin with the following sufficient condition for the optimality of a feasible solution of \eqref{E: w trace relax}.

\begin{theorem}
	\label{T:  w KKT conditions}
	Let $X$ be feasible for \eqref{E: w trace relax} and suppose that 
	there exist some $\mu \in \R$, $\lambda \in \R^N_+$, $\eta \in \R^{N\times N}_{+}$ and $S \in \Sigma^N_+$ such that
	\begin{align}
		- W + \lambda \e^T + \e \lambda^T - \eta + \mu I &= S \label{E: w dual feas} \\
		\lambda^T (X\e - \e)  &= 0  \label{E: w CS rowsum} \\		
		\tr(X  \eta) &= 0 \label{E: w CS nonneg} \\
		\tr (X S)  &= 0. \label{E: w CS sdp}
	\end{align}
	Then $X$ is optimal for \eqref{E: w trace relax}.
\end{theorem}


Note that 
$
	X = (k/N - \epsilon ) I + \epsilon \e\e^T
$
is  a strictly feasible solution of \eqref{E: w trace relax} for sufficiently small $\epsilon > 0$.
Thus, Slater's constraint qualification holds for \eqref{E: w trace relax}.
Therefore, a feasible solution $X$ is optimal for \eqref{E: w trace relax} if and only if it satisfies the Karush-Kuhn-Tucker conditions.
Theorem~\ref{T:  w KKT conditions} provides the necessary specialization to \eqref{E: w trace relax} of these necessary and sufficient conditions (see, for example, \cite[Section 5.5.3]{boydvdb} or \cite[Theorem~28.3]{Rockafellar}).


Let $K^*$ be a $k$-disjoint-clique subgraph of $K_N$ with vertex set composed of the disjoint cliques $C_1, \dots, C_k$ of sizes $r_1, \dots, r_k$  and let
$X^*$ be the corresponding feasible solution of \eqref{E: w trace relax} defined by \eqref{E: proposed sol}.
Let $C_{k+1} := V \setminus  (\cup^k_{i=1} C_i)$ and $r_{k+1} := N - \sum_{i=1}^k r_i$. 
Let $\hat r := \min_{i=1, \dots, k} r_i$.
Let $W \in \Sigma^N$ be a random symmetric matrix sampled from the planted cluster model according to $\Omega_1$ and $\Omega_2$ with means $\alpha$ and $\beta$.
To show that $X^*$ is optimal for \eqref{E: w trace relax},  we will construct multipliers
$ \mu \in \R$, $\lambda \in \R^N_+$, $\eta \in \R^{N\times N}_{+}$, and $S \in \Sigma^N_+$ satisfying
\eqref{E: w dual feas},  \eqref{E: w CS rowsum}, \eqref{E: w CS nonneg},  and \eqref{E: w CS sdp}.
Note that the gradient equation \eqref{E: w dual feas} provides 
an explicit formula for the multiplier $S$ for any choice of multipliers $\mu, \lambda, $ and $\eta$.


The proof of Theorem~\ref{T: weighted kdc results} uses techniques similar to those used in \cite{AV2}.
Specifically, the proof of Theorem~\ref{T: weighted kdc results} relies on constructing multipliers satisfying Theorem~\ref{T:  w KKT conditions}.
The multipliers $\lambda$ and $\eta$ will be constructed in blocks inherited from the block structure of the proposed solution $X^*$.
Again, once the multipliers $\mu,  \lambda, $ and $\eta$ are chosen, \eqref{E: w dual feas} provides an explicit formula for the
multiplier $S$.

The dual variables must be chosen so that the complementary slackness condition \eqref{E: w CS sdp} is satisfied.
The condition $\tr(X^*S) = 0$ is satisfied if and only if $X^*S = 0$, since both $X^*$ and $S$ are desired to be positive semidefinite (see \cite[Proposition 1.19]{Tuncel}).
Therefore, the multipliers must be chosen so that the left-hand side of \eqref{E: w dual feas} is orthogonal to the columns of $X^*$.
That is, we must choose the multipliers $\mu, \lambda, $ and $\eta$ such that $S$, as defined by \eqref{E: w dual feas},
has nullspace containing the columns of $X^*$.
By the special block structure of $X^*$, this is equivalent to requiring $S_{v, C_q}$ to sum to $0$
for all $q \in \{1,\dots, k\}$ and $v \in V.$

The gradient equation~\eqref{E: w dual feas}, coupled with the requirement that the columns of $X^*$ reside in the nullspace of $S$,
provides an explicit formula for the multiplier $\lambda$.
Moreover, the complementary slackness condition~\eqref{E: w CS nonneg} implies that all diagonal blocks $\eta_{C_q, C_q}$, $q=1,\dots, k$,
are equal to $0$.
To construct the remaining multipliers, we parametrize the remaining blocks of $S$ using the vectors $\y^{q,s}$ and $\z^{q,s}$ for all $q \neq s$.
These vectors are chosen to be the solutions of the system of linear equations defined by $SX^* = X^*S = 0$.
As in \cite{AV2},
we will show that this system is a perturbation of a linear system with known solution and will use this known solution to obtain estimates of $\y^{q,s}$ and $\z^{q,s}$.

Once the multipliers are chosen, we must establish dual feasibility to prove that $X^*$ is optimal for \eqref{E: w trace relax}.
In particular, we must show that $\lambda$ and $\eta$ are nonnegative and $S$ is positive semidefinite.
To establish nonnegativity of $\lambda$ and $\eta$, we will show that these multipliers are strictly positive in expectation
and close to their respective  means with extremely high probability.
To establish that $S$ is positive semidefinite, we will show that the diagonal blocks of $S$ dominate the off diagonal blocks with high probability.

%

\subsection{Choice of the multipliers and a sufficient condition for uniqueness and optimality}

\label{sec: c multipliers}
We construct the multipliers $\lambda, \eta$, and $S$  in blocks indexed by the vertex sets $C_1, \dots, C_{k+1}$.
The complementary slackness condition \eqref{E: w CS sdp} implies that the columns of $X$ are in the nullspace of $S$
since $\tr(XS) = 0$ if and only if $XS = 0$ for all positive semidefinite $X,S$.
Since $X^*_{C_q, C_q}$ is a multiple of the all-ones matrix $\e\e^T$ for each $q=1,\dots, k$,
and all other entries of $X^*$ are equal to $0$, \eqref{E: w CS sdp} implies that the block $S_{C_q, C_s}$
must have row and column sums equal to $0$ for all $q,s \in \{1,\dots, k\}$.
Moreover, since all entries of $X^*_{C_q, C_q}$ are nonzero, $\eta_{C_q,C_q} = 0$ for all $q = 1,\dots, k$ by \eqref{E: w CS nonneg}.

To compute an explicit formula for $\lambda$, note that
the condition $S_{C_q, C_q} \e = 0$ is satisfied if 
\begin{equation} \label{eq: lambda 1}
	0 = S_{C_q, C_q} \e = \mu \e  + r_q \lambda_{C_q} + (\lambda_{C_q}^T \e) \e - W_{C_q, C_q} \e
\end{equation}
for all $q=1,\dots,k$.
Rearranging \eqref{eq: lambda 1} shows that
$\lambda_{C_q}$ is the solution to the system
\begin{equation} \label{eq: lambda 2}
	(r_q I + \e\e^T) \lambda_{C_q} = W_{C_q, C_q} \e - \mu \e
\end{equation}
for all $q =1,\dots, k$.
We will use the Sherman-Morrision-Woodbury formula (see, for example, \cite[Equation (2.1.4)]{GV}), stated in the following lemma, to obtain the desired formula for $\lambda$.

\begin{lemma} \label{lem: SMW}
	Let $A \in \Sigma^{n\times n}$ be nonsingular and $\u, \bv \in \R^{n}$ be such that $1 + \bv^T A^{-1} \u \neq 0$.
	Then
	\begin{equation} \label{eq: SMW vector formula}
		(A + UV^T)^{-1} = A^{-1} - \frac{A^{-1} \u \bv^T A^{-1}}{1 + \bv^T A^{-1} \u}.
	\end{equation}
\end{lemma}

Applying \eqref{eq: SMW vector formula} with
$A = r_q I$, $\u=\bv = \e$ shows that choosing
\begin{equation} \label{lambda formula}
	\lambda_{C_q} = \frac{1}{r_q} \left( W_{C_q, C_q} \e - \frac{1}{2} \left(\mu + \frac{ \e^T W_{C_q, C_q} \e}{r_q} \right) \e \right)
\end{equation}
ensures that $\tr(S_{C_q, C_q} X^*_{C_q, C_q}) =0$ for all $q = 1,\dots, k$.

We next construct $\eta$.
Fix $q, s \in \{1, \dots, k+1\}$ such that $q \neq s$.
To ensure that $S_{C_q, C_s} \e = 0$ and $S_{C_s, C_q} \e =0$, we parametrize the entries of 
$\eta_{C_q, C_s}$ using the vectors $\y^{q,s}$ and $\z^{q,s}$.
In particular, we take
\begin{equation} \label{e: eta def}
	\eta_{C_q, C_s} = \left(\frac{\bar\delta_{q,k+1}}{2}\left( \alpha - \frac{\mu}{r_q} \right) + \frac{\bar \delta_{s,k+1}}{2}\left( \alpha - \frac{\mu}{r_s} \right) - \beta \right) \e\e^T + \y^{q,s} \e^T + \e(\z^{q,s})^T.
\end{equation}
Here $\bar \delta_{ij} := 1 - \delta_{ij}$, where $\delta_{ij}$  is the Kronecker delta function defined by $\delta_{ij} = 1$ if $i=j$ and $0$ otherwise.
That is, we take $\eta_{C_q, C_s}$ to be the expected value of $\lambda_{C_q} \e^T + \e \lambda_{C_s}^T - W_{C_q,C_s}$,
plus the parametrizing terms $\y^{q,s} \e^T$ and $\e (\z^{q,s})^T$.
The vectors $\y^{q,s}$ and $\z^{q,s}$ are chosen to be the solutions to the systems of linear equations imposed
by the requirement that $X^*S = SX^* = 0$.
As we will see, this system of linear equations is a perturbation of a linear system with known solution.
Using the solution of the perturbed system we obtain bounds on $\y^{q,s}$ and $\z^{q,s}$, which are  used
to establish that $\eta$ is nonnegative and $S$ is positive semidefinite.

Let 
\begin{equation}
	\label{e: tilde eta def}
	\tilde \eta_{C_q, C_s} := \lambda_{C_q} \e^T + \e \lambda_{C_s}^T - W_{C_q,C_s}.
\end{equation}
Note that the symmetry of $W$ implies that $\tilde \eta_{C_s, C_q} = \tilde \eta_{C_q, C_s}^T$.
Let $\b = \b^{q,s} \in \R^{C_q \cup C_s}$ be defined by
	$\b_{C_q} := \tilde \eta_{C_q, C_s} \e - \E[\tilde \eta_{C_q, C_s}] \e$
	and	
	$\b_{C_s} = \tilde \eta_{C_s, C_q} \e - \E[\tilde \eta_{C_s, C_q}] \e.$
We choose $\y = \y^{q,s}$  and $\z=\z^{q,s}$ to be solutions of the system
\begin{equation} \label{e: yz system}
	\mat{{cc} r_s I + \theta \e\e^T & (1-\theta) \e\e^T \\ (1-\theta) \e\e^T & r_q I + \theta \e\e^T}
	\vect{\y\\ \z} = \b
\end{equation}
for some scalar $\theta > 0$ to be defined later.
The requirement that the row sums of
$S_{C_q, C_s}$ are equal to zero is equivalent to $\y$ and $\z$ satisfying the system of linear equations
\begin{align} \notag 
	0 = -r_s \y_i -& \z^T \e  + r_s \left( \lambda_i - \frac{\bar \delta_{q,k+1}}{2 r_q}(\alpha r_q - \mu) \right)
		+ \left(\lambda_{C_s}^T \e - \frac{\bar\delta_{s,k+1}}{2}(\alpha r_s - \mu) \right) \\&
		- ([W_{C_q, C_s} \e]_i - r_s \beta)  \label{e: S row sum 1}
\end{align}
for all $i \in C_q$.
Similarly, the column sums of $S_{C_q, C_s}$ are equal to zero if and only if $\y$ and $\z$ satisfy
\begin{align} \notag
0 = -r_q \z_i -& \y^T \e  + r_q \left( \lambda_i - \frac{\bar \delta_{s,k+1}}{2 r_s}(\alpha r_s - \mu) \right)
		+ \left(\lambda_{C_q}^T \e - \frac{\bar \delta_{q,k+1}}{2}(\alpha r_q - \mu) \right) \\
		\label{e: S col sum 1}
		&- ([W_{C_s, C_q} \e]_i - r_q \beta) 
\end{align}
for all $i \in C_s$.
Note that the system of equations defined by \eqref{e: S row sum 1} and \eqref{e: S col sum 1} is equivalent to
\eqref{e: yz system} in the special case that $\theta = 0$.
However, when $\theta = 0$, the system of equations in \eqref{e: yz system} is singular, with nullspace
spanned by the vector $(\e; -\e)$. 
When $\theta$ is nonzero, each row of the system \eqref{e: yz system} has an additional term of the form
$\theta (\e^T \y - \e^T \z)$.
However, any solution $(\y; \z)$ of \eqref{e: yz system} for $\theta > 0$ is also a solution in the special case that $\theta = 0$.
Indeed, since $(\e; -\e)$ is in the nullspace of the matrix 
$$
	\mat{ {cc} r_s I & \e\e^T \\ \e\e^T & r_q I }
$$
and $\b_{C_q}^T \e = \b_{C_s} ^T \e$, 
taking the inner product of each side of \eqref{e: yz system} with $(\e; -\e)$ yields
$$
	\theta (r_q + r_s) (\e^T \y - \e^T \z) = \b_{C_q}^T \e - \b_{C_s}^T \e = 0.
$$
Therefore, the unique solution $(\y; \z)$ of \eqref{e: yz system}
also satisfies \eqref{e: S row sum 1} and\eqref{e: S col sum 1} for any $\theta > 0$ such that \eqref{e: yz system} is nonsingular.
In particular, note that \eqref{e: yz system} is nonsingular for $\theta = 1$. For this choice
of $\theta$, $\y$ and $\z$ are the unique solutions of the systems
$(r_s I + \e\e^T) \y = \b_1$ and $(r_q I + \e\e^T) \z = \b_2$,
where $\b_1 := \b_{C_q}$ and $\b_2 := \b_{C_s}$.
Applying \eqref{eq: SMW vector formula} with $A= r_sI$, $\u=\bv=\e$ and
$A = r_qI$, $\u= \bv =\e$ yields 
\begin{align}
	\y &= \frac{1}{r_s} \left(\b_1 - \frac{(\b_1^T \e)}{r_q + r_s} \e\right) \label{e: y formula}  \;\;\mbox{ and } \;\;
	\z = \frac{1}{r_q} \left( \b_2 - \frac{(\b_2^T \e)}{r_q + r_s} \e \right) 
\end{align}
respectively.
Finally, we choose $\mu = \epsilon \gamma \hat r$, where 
$
	\gamma = \gamma(\alpha, \beta, r) = \alpha (1 + \delta_{0, r_{k+1}} ) - 2 \beta,
$
and $\epsilon > 0$ is a scalar to be chosen later.


In summary, we choose the multipliers $\mu \in \R$, $\lambda \in \R^{N}$, $\eta \in \R^{N\times N}$ as follows:
\begin{align}
	\mu = \epsilon \gamma\hat r \label{e: mu def} 
\end{align}
\begin{equation}
	\lambda_{C_q} = \branchdef{ \displaystyle{\frac{1}{r_q} \left( W_{C_q, C_q} \e - \frac{1}{2} \left(\mu + \frac{ \e^T W_{C_q, C_q} \e}{r_q} \right) \e \right)}, & \mbox{if } q \in \{1,\dots, k\} \\
				0, &\mbox{if } q = k+1  	}	\label{e: lambda def}  
\end{equation}
\begin{align} \label{e: eta def}
	\eta_{C_q, C_s} = \branchdef{ \E[\tilde \eta_{C_q, C_s}] + \y^{q,s} \e^T + \e (\z^{q,s})^T, & \mbox{if } q, s\in \{1,\dots, k+1\}, q\neq s \\
					0, &\mbox{otherwise} }
\end{align}
where $\epsilon>0$ is a scalar to be defined later, $\tilde \eta_{C_q, C_s}$ is defined as in \eqref{e: tilde eta def}, and $\y^{q,s}, \z^{q,s}$ are given by
\eqref{e: y formula}  for all $q, s \in \{1,\dots, k+1\}$ such that $q\neq s$.
We choose $S$ according to \eqref{E: w dual feas}.
Finally, we  define the $(k+1) \times (k+1)$ block matrix $\tilde S$ in $\Sigma^N$ by
\begin{equation} \label{e: tilde S def}
	\tilde S_{C_q, C_s} = \branchdef{ 	
						\alpha \e\e^T - W_{C_q, C_s},  & \mbox{if } q = s, \, q, s \in \{1,\dots, k\} \\
						\beta \e\e^T - W_{C_q, C_s},  & \mbox{if } q \neq s, \, q, s \in \{1,\dots, k\} \\
						\beta \e\e^T - W_{C_q, C_s} + (\lambda_{C_q} - \E[\lambda_{C_q}] )\e^T ,  & \mbox{if } s = {k+1}  \\
						\beta \e\e^T - W_{C_q, C_s} + \e (\lambda_{C_s} - \E[\lambda_{C_s}] )^T,   & \mbox{if } q = {k+1} .
						}						
\end{equation}

We conclude with the following theorem, which provides a sufficient  condition ensuring that the proposed 
solution $X^*$ is the unique optimal solution for \eqref{E: w trace relax} and  $K^*$ is the unique maximum density
$k$-disjoint-clique subgraph of $K_N$ corresponding to $W$.

		
\begin{theorem} \label{T: S opt conds}
	Suppose that the vertex sets $C_1, \dots, C_k$ define a $k$-disjoint-clique subgraph $K^*$ of the complete graph $K_N = (V, E)$
	on $N$ vertices and let $C_{k+1} := V \setminus (\cup^k_{i=1} C_i).$
	Let $r_i := |C_i|$ for all $i=1,\dots, k+1$, and let $\hat r = \min_{i=1,\dots, k} r_i$.
	Let $W \in \Sigma^N$ be a random symmetric matrix sampled from the planted cluster model
	according to distributions  $\Omega_1, \Omega_2$ with means $\alpha, \beta$ satisfying \eqref{A: alpha beta ratio}.
	Let $X^*$ be the feasible solution for \eqref{E: w trace relax} corresponding to $C_1, \dots, C_k$ defined
	by \eqref{E: proposed sol}. Let $\mu \in \R,$ $\lambda \in \R^N,$  and $\eta \in \R^{N\times N}$
	be chosen according to \eqref{e: mu def}, \eqref{e: lambda def}, and \eqref{e: eta def},
	and let $S$ be chosen according to \eqref{E: w dual feas}.	
	Suppose that the entries of  $\lambda$ and  $\eta$ are nonnegative.
	Then there exists scalar $c > 0$  such that if
		$\| \tilde S\| \le c \gamma \hat r$
	then $X^*$ is optimal for \eqref{E: w trace relax}, and $K^*$  is the maximum density $k$-disjoint-clique
	subgraph of $K_N$ corresponding to $W$.
	Moreover, if 
	\begin{equation} \label{a: block weights}
		r_s \e^T W_{C_q, C_q} \e > r_q \e^T W_{C_q, C_s} \e
	\end{equation}
	for all $q, s \in \{1,\dots,k\}$ such that $q\neq s$,
	then $X^*$ is the unique optimal solution of \eqref{E: w trace relax} and
	$K^*$ is the unique maximum density $k$-disjoint-clique
	subgraph of $K_N$.
\end{theorem}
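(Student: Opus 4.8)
The argument follows the template of the proof of Theorem~\ref{thm: WKDB sufficient}, simplified by the symmetry of $W$. With $\mu,\lambda,\eta$ chosen as in \eqref{e: mu def}--\eqref{e: eta def} and $S$ defined by \eqref{E: w dual feas}, the gradient equation \eqref{E: w dual feas} holds by definition; the complementary slackness condition \eqref{E: w CS rowsum} holds since $\lambda_{C_{k+1}}=0$ while each block $X^*_{C_q,C_q}$, $q\le k$, has row sums equal to $1$; condition \eqref{E: w CS nonneg} holds since $\eta_{C_q,C_q}=0$ for $q\le k$ while $X^*$ is supported on the blocks $C_q\times C_q$; and \eqref{E: w CS sdp} holds because $\lambda_{C_q}$ and the parametrizing vectors $\y^{q,s},\z^{q,s}$ were chosen exactly so that $SX^*=X^*S=0$, equivalently $\tr(SX^*)=0$ since $S,X^*\succeq0$. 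As $\lambda,\eta\ge0$ is assumed, Theorem~\ref{T:  w KKT conditions} reduces the claim to showing $S\succeq0$ whenever $\|\tilde S\|\le c\gamma\hat r$ for a suitable absolute constant $c>0$.

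\noindent To prove $S\succeq0$, fix $\x\in\R^N$ and decompose $\x=\sum_{q=1}^k\varphi_q\x_q+\x_{k+1}$, with $\x_q$ the characteristic vector of $C_q$ and $\x_{k+1}\perp\spn\{\x_1,\dots,\x_k\}$. Each $\x_q$ is a scalar multiple of a column of $X^*$, so $\spn\{\x_1,\dots,\x_k\}\subseteq\Null S$, whence $\x^TS\x=\x_{k+1}^TS\x_{k+1}$, and $\e^T\x_{k+1}(C_q)=0$ for every $q\le k$. Expanding $S=-W+\lambda\e^T+\e\lambda^T-\eta+\mu I$ with the chosen multipliers and subtracting $\tilde S$ from \eqref{e: tilde S def}, one checks block by block that $S-\tilde S-\mu I$ is, on each block $C_q\times C_s$ with $q,s\le k$, a sum of terms of the form $\u\e^T+\e\bv^T$ (the deviations $\lambda_{C_q}-E[\lambda_{C_q}]$, the $-\tfrac{\mu}{r_q}\e\e^T$ correction, and the parametrizing terms $\y^{q,s}\e^T$, $\e(\z^{q,s})^T$), and on the noise blocks is of the same form after setting $\y^{q,k+1}=0$ as in the biclustering construction. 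Since $\e^T\x_{k+1}(C_q)=0$ for $q\le k$, every such term is annihilated by $\x_{k+1}$, so $\x_{k+1}^TS\x_{k+1}$ reduces to the contributions of $\tilde S$ and $\mu I$, giving
\[
 \x_{k+1}^TS\x_{k+1}\ \ge\ \bigl(\mu-\|\tilde S\|\bigr)\|\x_{k+1}\|^2\ =\ \bigl(\epsilon\gamma\hat r-\|\tilde S\|\bigr)\|\x_{k+1}\|^2 ,
\]
which is nonnegative once $\|\tilde S\|\le c\gamma\hat r$ with $c<\epsilon$, the constant $\epsilon>0$ from \eqref{e: mu def} being already pinned down in the nonnegativity analysis. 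Hence $S\succeq0$, so $X^*$ is optimal for \eqref{E: w trace relax} by Theorem~\ref{T:  w KKT conditions}; and since every $k$-disjoint-clique subgraph of $K_N$ induces a feasible point of \eqref{E: w trace relax} whose objective equals the sum of its densities, optimality of $X^*$ shows that $K^*$ is a maximum density $k$-disjoint-clique subgraph.

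\noindent For the uniqueness statement, the displayed bound also gives $\x^TS\x=0\Rightarrow\x_{k+1}=0$, so $\Null S=\spn\{\x_1,\dots,\x_k\}$ exactly. If $\tilde X$ is any other optimal solution, complementary slackness forces $\tilde XS=0$, hence $\tilde X=\sum_{q,s\le k}\sigma_{qs}\x_q\x_s^T$ with $\sigma\succeq0$; entrywise nonnegativity of $\tilde X$ and the positive, disjoint supports of the $\x_q$ give $\sigma\ge0$. The constraints $\tilde X\e\le\e$ yield $\sum_s\sigma_{qs}r_s\le1$ for each $q$, and equality of objective values with $X^*$ yields $\sum_{q,s}\sigma_{qs}\e^TW_{C_q,C_s}\e=\sum_q\tfrac1{r_q}\e^TW_{C_q,C_q}\e$; eliminating $\sigma_{qq}$ between these gives $\sum_{q\ne s}\tfrac{\sigma_{qs}}{r_q}\bigl(r_q\e^TW_{C_q,C_s}\e-r_s\e^TW_{C_q,C_q}\e\bigr)\ge0$, whose summands are all $\le0$ by \eqref{a: block weights} and $\sigma_{qs}\ge0$. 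So all off-diagonal $\sigma_{qs}$ vanish, and then $\sigma_{qq}\le1/r_q$ together with the objective equality (using $\e^TW_{C_q,C_q}\e>0$) forces $\sigma_{qq}=1/r_q$, i.e. $\tilde X=X^*$. Thus $X^*$ is the unique optimum of \eqref{E: w trace relax}; a distinct maximum density $k$-disjoint-clique subgraph would induce a distinct optimal point, so $K^*$ is the unique maximum density $k$-disjoint-clique subgraph.

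\noindent The main obstacle is the block-by-block bookkeeping in the second paragraph: verifying that the parametrizing corrections $\y^{q,s},\z^{q,s}$ and the intra-clique deviations of $\lambda$ genuinely drop out against $\x_{k+1}$, and handling the noise blocks so that no uncontrolled residual survives --- this is precisely where the definition of $\tilde S$ in \eqref{e: tilde S def} is calibrated, and where the arithmetic hypothesis \eqref{A: alpha beta ratio}, which guarantees $\mu=\epsilon\gamma\hat r>0$ and (in the separately treated nonnegativity of $\eta$) the positivity of the $\tfrac\alpha2-\beta$ coefficient on the noise blocks, enters. The spectral estimate $\|\tilde S\|\le c\gamma\hat r$ itself, the analogue of Theorem~\ref{thm: B S bound}, is established separately and is not part of this statement.
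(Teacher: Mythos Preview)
Your plan follows the paper's proof closely: verify that the KKT data satisfy \eqref{E: w dual feas}--\eqref{E: w CS sdp} by construction, decompose any test vector along $\spn\{\bv_1,\dots,\bv_k\}$ and its orthogonal complement, argue that $\x_{k+1}^T(S-\tilde S-\mu I)\x_{k+1}=0$ because each block of $S-\tilde S-\mu I$ is a sum of rank-one pieces of the form $\u\e^T$ or $\e\bv^T$ annihilated by $\x_{k+1}(C_q)\perp\e$, and conclude $\x^TS\x\ge(\mu-\|\tilde S\|)\|\x_{k+1}\|^2$. Your uniqueness argument sums over all $q$ to force every off-diagonal $\sigma_{qs}$ to zero and then recovers $\sigma_{qq}=1/r_q$; the paper instead selects a single $q$ for which the weighted density inequality must fail and derives a contradiction. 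Both routes reach the same conclusion, and yours is a little more explicit about why $\tilde X=X^*$.

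One correction to your bookkeeping: you assert $\y^{q,k+1}=0$ ``as in the biclustering construction.'' That is indeed how Section~\ref{sec: b multipliers} handles the noise blocks, but in the clustering appendix the vectors $\y^{q,s},\z^{q,s}$ are defined by \eqref{e: y formula}--\eqref{e: z formula} \emph{uniformly} for all $q\ne s$ in $\{1,\dots,k+1\}$, so $\y^{q,k+1}$ is not zero here. On the block $(C_q,C_{k+1})$ with $q\le k$ one actually gets $(S-\tilde S)_{C_q,C_{k+1}}=-\y^{q,k+1}\e^T-\e(\z^{q,k+1})^T$; only the second summand is killed by $\x_{k+1}(C_q)\perp\e$, while $\e^T\x_{k+1}(C_{k+1})$ need not vanish. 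The paper's own justification at this point is a single clause, but your appeal to the biclustering convention does not match the construction fixed in \eqref{e: eta def}; this is the spot in your plan where the block-by-block verification genuinely needs to be redone with the correct $\y^{q,k+1}$.
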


\begin{proof}
	By construction,  $\mu$, $\lambda$, $\eta$, and $S$ satisfy \eqref{E: w dual feas}, \eqref{E: w CS rowsum},
	\eqref{E: w CS nonneg}, and \eqref{E: w CS sdp}.
	Moreover,  $\lambda$ and $\eta$ are nonnegative by assumption.
	Therefore,  it suffices to show that $S$ is positive semidefinite to prove that $X^*$ is optimal for \eqref{E: w trace relax}.
	To do so, we fix $\x \in \R^N$ and decompose $\x$ as $\x = \x_1 + \x_2$ where
	$$
		\x_1 (C_i) =
			\branchdef{ \phi_i \e, &\mbox{if } i \in \{1,\dots, k\} \\
					0, & \mbox{if } i = k + 1}
	$$
	for some $\phi \in \R^k$ chosen such that $\x_2(C_i)$ is orthogonal to $\e$ for all $i=1,\dots, k$,
	and $\x_2 (C_{k+1}) = \x(C_{k+1})$.
	Here, and in the rest of the note, the notation $\bv(A)$ denotes the vector in $\R^{|A|}$ with entries equal to those of $\bv$ indexed by $A$.
	Similarly, the notation $M(A,B)$ denotes the $|A|\times |B|$ matrix with entries equal those of  $M$
	indexed by $A$ and $B$ respectively.
	We have
	\begin{align*}
		\x^T S \x &= \x_2^T S \x_2 = \x_2^T (\tilde S + \mu I) \x_2 \ge \left( \mu - \|\tilde S\| \right) \|\x_2\| \ge \left( \epsilon \gamma \hat r - \|\tilde S\| \right) \|\x_2\| 			
	\end{align*}
	since $\x_2(C_i)$ is orthogonal to $\e$ for all $i=1,\dots, k$ and, hence,  $\x_2^T (S - \tilde S - \mu I) \x_2 = 0$.
	Therefore, if
	 $\|\tilde S\| \le \epsilon \gamma \hat r$, then $\x^T S \x \ge 0$ for all $\x \in \R^N$ with equality if and only if $\x_2 = 0$.
	In this case, $X^*$ is optimal for \eqref{E: w trace relax}.
	Moreover, $\bv_i$ is in the nullspace of $S$ for all $i=1,\dots, k$ by \eqref{E: w CS sdp} and  the fact that $X^* = \sum_{i=1}^k \bv_i \bv_i^T/r_i$.
	Since $\x^T S \x  = 0$ if and only if $\x_2 =0$, the nullspace of $S$ is exactly equal to the span of $\{\bv_1, \dots, \bv_k\}$
	and $S$ has rank equal to $N - k$.
	
	To see that $X^*$ is the unique optimal solution for \eqref{E: w trace relax} if Assumption~\eqref{a: block weights}
	holds, suppose, on the contrary, that $\tilde X$ is also optimal for \eqref{E: w trace relax}.
	By \eqref{E: w CS sdp}, we have $\tr(\tilde X S) = 0$, which holds if and only if $\tilde X S = 0$.
	Therefore, the row and column spaces of $\tilde X$ lie in the nullspace of $S$.
	Since $\tilde X \succeq 0$ and $\tilde X\ge 0$, we may write $\tilde X$ as
	\begin{equation} \label{e: tW svd}
		\tilde X = \sum_{i=1}^k \sum_{j=1}^k \sigma_{ij} \bv_i \bv_j^T		
	\end{equation}
	for some $\sigma \in \R^{k\times k}_+$.	
	The fact that $\tilde X$ satisfies $\tilde X \e \le \e$ implies that
	\begin{equation} \label{e: uniqueness row sum}
		\sigma_{qq} r_q + \sum_{\substack{s =1 \\ s\neq q}}^k \sigma_{qs} r_s \le 1
	\end{equation}
	for all $q=,1\dots, k$.
	Moreover, since $\tr(W\tilde X) = \tr(W X^*)$, there exists some $q \in \{1,\dots, k\}$
	such that
	\begin{equation} \label{e: uniqueness values 1}
		\sigma_{qq} \bv_q^T W \bv_q + \sum_{\substack{s =1 \\ s\neq q}}^k \sigma_{qs} \bv_q^T W \bv_s \ge \frac{\bv_q^T W \bv_q}{r_q}.
	\end{equation}
	Combining \eqref{e: uniqueness row sum} and \eqref{e: uniqueness values 1} shows that
	\begin{align*}
		0 &\le \bv_q^T W \bv_q \left( \frac{1}{r_q} - \sum_{\substack{s =1 \\ s\neq q}}^k \frac{ \sigma_{qs} r_s}{r_q}\right)
			+ \sum_{\substack{s =1 \\ s\neq q}}^k \sigma_{qs} \bv_q^T W \bv_s -\frac{\bv_q^T W \bv_q}{r_q} \\
		&= \sum_{\substack{s =1 \\ s\neq q}}^k \frac{\sigma_{qs}}{r_q} (r_q \bv_q^T W \bv_s - r_s \bv_q^T W \bv_q),
	\end{align*}	
	contradicting Assumption~\eqref{a: block weights}.
	Therefore, $X^*$ is the unique optimal solution of \eqref{E: w trace relax}
	as required. \qed
\end{proof}


\subsection{Nonnegativity of $\bold{\lambda}$ and $\bold{\eta}$ in the planted case}
\label{sec: c nonneg}
We now establish that the entries of $\lambda$ and $\eta$ are nonnegative with probability tending exponentially
to $1$ as $\hat r$ approaches $\infty$ for sufficiently small choice of $\epsilon$ in \eqref{e: mu def}.

We begin by deriving lower bounds on the entries of $\eta$.
To do so, we will repeatedly apply the following theorem of Hoeffding (see \cite[Theorem 1]{Hoeffding}),
which provides a bound on the tail distribution of a sum of bounded, independent random variables.

\begin{theorem}[Hoeffding's Inequality]
	\label{thm: Hoeffding}
	Let $X_1, \dots, X_m$ be \iid variables sampled from a distribution satisfying $0 \le X_i \le 1$ for all $i = 1,\dots, m$.
	Let $S = X_1 + \cdots + X_m$. Then
	\begin{equation}
		\label{Hoeffding-1}
		Pr( |S - \E[S]| > t) \le 2 \exp \left(\frac{-2t^2}{m}\right)
	\end{equation}
	for all $t > 0$.
\end{theorem}

To show that $\eta_{ij} \ge 0$ for all $i,j\in V$ with high probability,
we will use the following lemma, which provides an upper bound on $\|\y^{q,s}\|_\infty$ and $\|\z^{q,s}\|_\infty$
 for all $q,s \in \{1,\dots,k+1\}$ such that $q\neq s$,
 holding with probability tending to $1$ as $\hat r$ tends to $\infty$.

\begin{lemma} \label{l: y, z inf bound}
	There exists scalar $\tilde{c} > 0$ such that
	$
		\|\y^{q,s}\|_\infty + \|\z^{q,s}\|_\infty \le \tilde c \hat r^{-1/4}
	$	
	for all $q,s \in \{1,\dots, k+1\}$ such that $q\neq s$
	with \pteto{\hat r}.
\end{lemma}

\begin{proof}
Fix $q, s\in \{1, \dots, k\}$ such that $q \neq s$.
Without loss of generality, we assume that $r_q \le r_s$.
The proof for the case when either $q$ or $s$ is equal to $k+1$ is analogous.
We first obtain an upper bound on $\|\y\|_\infty = \|\y^{q,s}\|_\infty$.
By the triangle inequality, we have
\begin{equation} \label{e: y inf bound 1}
	\|\y\|_\infty \le \frac{1}{r_s}\left\|\b_1 + \frac{|\b_1^T \e|}{r_q + r_s} \e \right\|_\infty
		\le \frac{1}{r_s} \left( \|\b_1\|_\infty + \frac{|\b_1^T \e|}{r_q + r_s} \right).
\end{equation}
Hence, to obtain an upper bound on  $\|\y\|_\infty$, it suffices to obtain  bounds on
$\|\b_1\|_\infty$ and $|\b_1^T\e|$.
We begin with $\|\b_1\|_\infty$.
Recall that we have 
\begin{equation} \label{e: bi formula}
	\b_i = r_s \left( \lambda_i - \frac{1}{2r_q} (\alpha r_q - \mu) \right) + \left( \lambda_{C_s}^T \e - \frac{1}{2}(\alpha r_s - \mu) \right) - \left( \sum_{j \in C_s} W_{ij} - \beta r_s \right).
\end{equation}
for each $i \in C_q$.
Note that
\begin{align*}
	\lambda_{C_s}^T \e = \frac{1}{r_s} \left( \e^T W_{C_s, C_s} \e - \frac{1}{2} r_s \mu - \frac{1}{2} \e^T W_{C_s,C_s} \e \right)
		= \frac{1}{2r_s}( \e^T W_{C_s,C_s} \e - r_s \mu).
\end{align*}
Applying \eqref{Hoeffding-1} with $S = \tr(W_{C_s, C_s})$, $t = r_s^{3/2}$ and $S = \sum_{i\in C_s} \sum_{j \in C_s,  j > i}  W_{ij}$, $t = r_s^{3/2}/2$  shows that
\begin{align} 
	&\left|\lambda_{C_s}^T \e  -  \frac{1}{2}(\alpha r_s - \mu) \right|
		= \frac{1}{2r_s} |\e^T W_{C_s, C_s} \e - \alpha r_s^2| \notag\\
		&\le \frac{1}{2 r_s} \rbra{ |\tr(W_{C_s, C_s}) - \alpha r_s | + 2 \left| \sum_{i\in C_s} \sum_{\substack{j \in C_s \\ j > i}  } W_{ij} -  \frac{ \alpha r_s(r_s-1)}{2} \right| } 
		\le \sqrt{r_s} \label{e: bi term-2}
\end{align}
with probability at least
$1 - 2\exp(-2 r_s^2) - 2\exp(- r_s^2/(r_s-1)) \ge 1 - \tilde p_1,$
 where 	$\tilde p_1 := 2 \exp(-2 \hat r^2) +  2 \exp(-\hat r)$.
Next, applying \eqref{Hoeffding-1} with $S = \sum_{\ell\in C_s} W_{i\ell}$ and $t = r_s^{3/4}$ shows
that
\begin{equation} \label{e: bi term 3}
	\left| \sum_{\ell \in C_s} W_{i\ell} - \beta r_s \right| \le r_s^{3/4}
\end{equation}
with probability at least $1 - \tilde p_2$ where
$\tilde p_2 := 2 \exp(-2 \hat r^{1/2}).$
Finally, applying \eqref{Hoeffding-1} with $S = \sum_{\ell\in C_q} W_{i\ell},$ $t = r_q^{3/4}$
and \eqref{e: bi term-2} shows that
\begin{align}
	\left|\lambda_i  - \frac{1}{2r_q} (\alpha r_q - \mu)\right|
		 &\le \frac{1}{r_q} \left| \sum_{\ell\in C_q} W_{i\ell} - r_q \alpha \right|
		+ \frac{1}{2 r_q^2} \left| \e^T W_{C_q, C_q} \e - \alpha r_q^2 \right| \notag\\
		&\le r_q^{-1/4} + r_q^{-1/2}  \le 2r_q^{-1/4} \label{e: bi term 1}
\end{align}
with probability at least $1 - \tilde p_1 - \tilde p_2$.
Combining \eqref{e: bi term-2}, \eqref{e: bi term 3} and \eqref{e: bi term 1} and applying the union bound shows that
\begin{equation} \label{e: b1 inf norm}
	\|\b_1\|_\infty \le  4 r_q^{-1/4} r_s
\end{equation}
with probability at least $1 - \tilde p_1 - 2 r_q \tilde p_2.$
By a similar argument, 
$
	\|\b_2\|_\infty \le  4 r_q^{3/4}
$
with probability at least $1 - \tilde p_1 - 2 r_s \tilde p_2.$

We next obtain an upper bound on $|\b_1^T \e|$ and $|\b_2^T \e|$.
We have
\begin{equation} \label{b1 e expansion}
	\b_1^T \e = r_s \left( \lambda_{C_q}^T \e - \frac{1}{2} (\alpha r_q - \mu) \right) 
		+ r_q \left( \lambda_{C_s}^T \e - \frac{1}{2} (\alpha r_s - \mu) \right) + (\beta r_s r_q - \e^T W_{C_q, C_s} \e).
\end{equation}
By \eqref{e: bi term-2} and the union bound, we have
\begin{align}
	\left|\lambda_{C_s}^T \e - \frac{1}{2}(\alpha r_s - \mu) \right| &\le  \sqrt{r_s} \label{v2 1}  \\
	\left|\lambda_{C_q}^T \e - \frac{1}{2}(\alpha r_q - \mu) \right| &\le  \sqrt{r_q} \label{v2 2}
\end{align}
with probability at least $1 - 2 \tilde p_1$.
Moreover, applying \eqref{Hoeffding-1} with $S = \e^T W_{C_q, C_s} \e$ and $t = r_q \sqrt{r_s}$
shows that
\begin{equation} \label{v3 bound}
	|\e^T W_{C_q, C_s} \e - \beta r_s r_q| \le  r_q \sqrt{r_s}
\end{equation}
with probability at least $1 -\tilde p_3$, where $\tilde p_3 := 2 \exp(- 2 \hat r).$
Substituting \eqref{v2 1} and \eqref{v3 bound} into \eqref{b1 e expansion}, we have 
\begin{equation} \label{|b1 e|}
	|\b_1^T \e| \le  3 r_s \sqrt{r_q}
\end{equation}
for some scalar $c_3>0$ with probability 
at least $1 - 2 \tilde p_1 - \tilde p_3$ by the union bound.
Similarly, 
\begin{equation} \label{|b2 e|}
	|\b_2^T \e| \le 3 r_s \sqrt{r_q}
\end{equation}
with probability at least $1 - 2 \tilde p_1 - \tilde p_3$.
Substituting \eqref{e: b1 inf norm} and \eqref{|b1 e|} in \eqref{e: y inf bound 1}
yields
\begin{equation} \label{e: y inf bound}
	\| \y\|_\infty \le  \tilde c_1 r_q^{-1/4 },
\end{equation}
for some scalar $\tilde c_1 > 0$,
with probability at least 
$
	1 - (3 \tilde p_1 + 2 r_q \tilde p_2 + \tilde p_3).
$
Similarly, there exists scalar $\tilde c_2 > 0$ such that
\begin{equation} \label{e: z inf bound}
	\|\z\|_\infty \le \tilde c_2 r_q^{-1/4}
\end{equation}
with probability at least 
$
	1 - (3 \tilde p_1 + 2 r_s \tilde p_2 + \tilde p_3).
$
Combining \eqref{e: y inf bound} and \eqref{e: z inf bound} and applying the union bound over all $q,s$  completes the proof.
\qed
\end{proof}
\vspace{0.15in}

As an immediate consequence of Lemma~\ref{l: y, z inf bound}, 
$\eta$ is nonnegative with probability tending exponentially to $1$ for sufficiently large
values of $\hat r$.

\begin{cor}\label{C: eta nonneg}
	Suppose that $\alpha$ and $\beta$ satisfy \eqref{A: alpha beta ratio}.
	Then the entries of the matrix $\eta$ are nonnegative with
	probability tending exponentially to $1$ as $\hat r$ approaches $\infty$.
\end{cor}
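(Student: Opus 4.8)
The plan is to read the entries of $\eta$ directly off the defining formula \eqref{e: eta def} and show that, block by block, a strictly positive deterministic term dominates the random perturbation $\y^{q,s}\e^T+\e(\z^{q,s})^T$ uniformly. The diagonal blocks $\eta_{C_q,C_q}$, $q=1,\dots,k$, and the block $\eta_{C_{k+1},C_{k+1}}$ are identically zero, so there is nothing to check for them. For $q\neq s$ in $\{1,\dots,k+1\}$ and indices $i\in C_q$, $j\in C_s$, the $(i,j)$ entry of $\eta_{C_q,C_s}$ equals
$$
	\tfrac{\bar\delta_{q,k+1}}{2}\Big(\alpha-\tfrac{\mu}{r_q}\Big)+\tfrac{\bar\delta_{s,k+1}}{2}\Big(\alpha-\tfrac{\mu}{r_s}\Big)-\beta+\y^{q,s}_i+\z^{q,s}_j ,
$$
so it suffices to bound the constant part below by a positive number depending only on $\alpha,\beta$ and to bound $|\y^{q,s}_i+\z^{q,s}_j|\le\|\y^{q,s}\|_\infty+\|\z^{q,s}\|_\infty$ above.

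First I would handle the constant part. Since $\mu=\epsilon\gamma\hat r$ and $r_q\ge\hat r$ whenever $q\le k$, we have $\mu/r_q\le\epsilon\gamma$ for such $q$. If $q,s\le k$ the constant part is $\alpha-\beta-\tfrac{\mu}{2}(\tfrac1{r_q}+\tfrac1{r_s})\ge\alpha-\beta-\epsilon\gamma$; if exactly one of $q,s$ is $k+1$ (which can only occur when $r_{k+1}>0$, since otherwise $C_{k+1}$ is empty) it is $\tfrac{\alpha}{2}-\beta-\tfrac{\mu}{2r_q}\ge\tfrac{\alpha}{2}-\beta-\tfrac{\epsilon\gamma}{2}$. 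Now invoke the hypothesis \eqref{A: alpha beta ratio}: when $r_{k+1}=0$ we have $\gamma=2(\alpha-\beta)$, so $\alpha-\beta=\gamma/2$; when $r_{k+1}>0$ we have $\gamma=\alpha-2\beta$, so $\alpha-\beta\ge\gamma$ and $\tfrac{\alpha}{2}-\beta=\gamma/2$. In all cases the constant part is at least $\gamma(\tfrac12-\epsilon)$, hence bounded below by some $c_0=c_0(\alpha,\beta)>0$ once $\epsilon$ is fixed in $(0,\tfrac12)$; $\epsilon$ may be shrunk further as needed to satisfy this alongside any earlier constraint on $\epsilon$.

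Then I would apply Lemma~\ref{l: y, z inf bound}, which furnishes a scalar $\tilde c>0$ with $\|\y^{q,s}\|_\infty+\|\z^{q,s}\|_\infty\le\tilde c\,\hat r^{-1/4}$ simultaneously for all $q\neq s$ with probability tending exponentially to $1$ as $\hat r\to\infty$. On that event each entry of an off-diagonal block $\eta_{C_q,C_s}$ ($q\neq s$) is at least $c_0-\tilde c\,\hat r^{-1/4}$, while all remaining entries of $\eta$ vanish; hence $\eta\ge 0$ entrywise as soon as $\hat r\ge(\tilde c/c_0)^4$, and since $\hat r\to\infty$ this gives the corollary with the same exponentially small failure probability. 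There is no real obstacle here: the probabilistic content is entirely contained in Lemma~\ref{l: y, z inf bound}, and the only care needed is the elementary case split above, in particular tracking the dichotomy $r_{k+1}=0$ versus $r_{k+1}>0$ that enters through $\gamma$.
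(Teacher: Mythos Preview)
Your proposal is correct and follows essentially the same approach as the paper: both arguments read off the entrywise formula for $\eta_{C_q,C_s}$, show that the deterministic constant term is at least $(\tfrac12-\epsilon)\gamma$ via the same case split on whether $q$ or $s$ equals $k+1$, and then invoke Lemma~\ref{l: y, z inf bound} to absorb the $\y^{q,s},\z^{q,s}$ perturbation. Your treatment of the dichotomy $r_{k+1}=0$ versus $r_{k+1}>0$ is slightly more explicit than the paper's, but the substance is identical.
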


\begin{proof}
Fix  $i \in C_q$, $j \in C_s$ for some $q, s\in\{1, \dots, k+1\}$ such that $q\neq s$.
Recall that
$$
	\eta_{C_q, C_s} = \left(\frac{\bar\delta_{q,k+1}}{2}\left( \alpha - \frac{\mu}{r_q} \right) + \frac{\bar \delta_{s,k+1}}{2}\left( \alpha - \frac{\mu}{r_s} \right) - \beta \right) \e\e^T + \y^{q,s} \e^T + \e(\z^{q,s})^T.
$$
Therefore,
since $\gamma > 0$ by \eqref{A: alpha beta ratio}, Lemma~\ref{l: y, z inf bound} implies that
\begin{align*}
	\eta_{ij} &\ge \frac{(1-\delta_{q,k+1})}{2}\rbra{\alpha - \frac{\mu}{r_q}} + \frac{(1-\delta_{s,k+1})}{2}\rbra{\alpha - \frac{\mu}{r_s}}
				- \beta - \|\y^{q,s}\|_\infty - \|\z^{q,s}\|_\infty \\
	&\ge   \rbra{ \frac{1}{2} - \epsilon} \gamma - \tilde c \hat r^{-1/4}	\ge 0,
\end{align*}
for all sufficiently small $\epsilon > 0$ and sufficiently large $\hat r$
with \pteto{\hat r},
since at most one of $q$ and $s$ is equal to $k+1$.
\qed
\end{proof}
\vspace{0.15in}

The following lemma provides a similar lower bound on the entries of $\lambda$.

\begin{lemma} \label{T: lambda bound}
	There exist scalars $\bar c_1, \bar c_2 > 0$ such that
	$	\lambda_{i} \ge \hat r (\bar c_1 - \bar c_2 \hat r^{-1/4} ) $
	for all $i \in V \setminus C_{k+1}$
	with \pteto{\hat r}.
\end{lemma}

\begin{proof}
Fix $q \in \{1,\dots,k\}$ and $i \in C_q$. Recall that 
$$
	\lambda_i = \sum_{j \in C_q} W_{ij} - \frac{1}{2r_q} \e^T W_{C_q, C_q} \e - \frac{\mu}{2}.
$$
Applying \eqref{Hoeffding-1} with $S = \sum_{j\in C_q} W_{ij}$ and $t=r_q^{3/4}$ yields
\begin{equation} \label{e: W row sum bound}
	\sum_{j \in C_q} W_{ij} \ge \alpha r_q - r_q^{3/4}
\end{equation}
with probability at least $1 - \tilde p_2$.
Moreover, \eqref{v2 2} implies that
\begin{equation} \label{e: W block sum bound}
	\frac{1}{2r_q} \e^T W_{C_q, C_q} \e \le \frac{1}{2}(\alpha r_q +  \sqrt{r_q})
\end{equation}
with probability at least $1 - \tilde p_1$.
Combining \eqref{e: W row sum bound} and \eqref{e: W block sum bound} and applying the union bound shows
that there exist scalars $\bar c_1, \bar c_2 > 0$  such that
\begin{align*}
	\lambda_i &\ge  \alpha r_q - r_q^{3/4} - \frac{1}{2}( \alpha r_q +  \sqrt{r_q}) - \frac{\mu}{2}  	
		\ge r_q (\bar c_1 - \bar c_2 r_q^{-1/4} )
\end{align*}
with probability at least $1 - \tilde p_1 - \tilde p_2$ for sufficiently small choice of $\epsilon > 0$ in \eqref{e: mu def}.
 Applying the union bound over all
$i \in  V \setminus C_{k+1}$ completes the proof.
\qed
\end{proof}
\vspace{0.15in}

Note that Lemma~\ref{T: lambda bound} implies that $\lambda \ge 0$ with probability tending exponentially to $1$ as $\hat r$ tends to $\infty$.
Therefore, $\mu$, $\lambda$, $\eta$ constructed according to \eqref{e: mu def}, \eqref{e: lambda def}, and \eqref{e: eta def} are dual feasible for \eqref{E: w trace relax}
with \pteto{\hat r} if the left-hand side of \eqref{E: w dual feas}  is positive semidefinite.
The following lemma states the uniqueness condition given by \eqref{a: block weights} is also satisfied with high probability for sufficiently
large $\hat r$.

\begin{lemma} \label{T: WKDC uniqueness}
	If $\hat r > 9/(\alpha - \beta)^2$ then
	$
		r_s \e^T W_{C_q, C_q} \e > r_q \e^T W_{C_q, C_s} \e
	$
	for all $q,s \in \{1,\dots, k\}$ such that $q \neq s$ with \pteto{\hat r}.
\end{lemma}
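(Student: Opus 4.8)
The plan is to show that both sides of the desired inequality concentrate sharply around their expectations, whose gap is a strictly positive multiple of $r_q^2 r_s$, and that under the stated lower bound on $\hat r$ the random fluctuations are too small to erase this gap. First I would fix $q\neq s$ in $\{1,\dots,k\}$ and recall that, in the planted cluster model, every entry of the diagonal block $W_{C_q,C_q}$ has mean $\alpha$ and every entry of the off-diagonal block $W_{C_q,C_s}$ has mean $\beta$, so that $E[\e^T W_{C_q,C_q}\e] = \alpha r_q^2$ and $E[\e^T W_{C_q,C_s}\e] = \beta r_q r_s$. Hence
$$
	E\rbra{ r_s\,\e^T W_{C_q,C_q}\e - r_q\,\e^T W_{C_q,C_s}\e } = (\alpha-\beta)\, r_q^2 r_s,
$$
which is strictly positive since \eqref{A: alpha beta ratio} forces $\alpha>\beta$.

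Next I would control the two blocks using the concentration estimates already established in the proof of Lemma~\ref{l: y, z inf bound}. Splitting $\e^T W_{C_q,C_q}\e$ into its diagonal part $\tr(W_{C_q,C_q})$ and its strictly upper triangular part and applying Hoeffding's inequality \eqref{Hoeffding-1} (Theorem~\ref{thm: Hoeffding}) with $t=r_q^{3/2}$ and $t=r_q^{3/2}/2$ respectively yields $|\e^T W_{C_q,C_q}\e - \alpha r_q^2|\le 2 r_q^{3/2}$, exactly as in \eqref{v2 2}, with probability at least $1-\tilde p_1$; applying \eqref{Hoeffding-1} to the $r_q r_s$ independent entries of $W_{C_q,C_s}$ with $t = r_q\sqrt{r_s}$ gives $|\e^T W_{C_q,C_s}\e-\beta r_q r_s|\le r_q\sqrt{r_s}$, as in \eqref{v3 bound}, with probability at least $1-\tilde p_3$.

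On the intersection of these two events the triangle inequality gives
$$
	r_s\,\e^T W_{C_q,C_q}\e - r_q\,\e^T W_{C_q,C_s}\e \;\ge\; (\alpha-\beta) r_q^2 r_s - 2 r_s r_q^{3/2} - r_q^2\sqrt{r_s} \;=\; r_q^2 r_s\rbra{ (\alpha-\beta) - \frac{2}{\sqrt{r_q}} - \frac{1}{\sqrt{r_s}} }.
$$
Since $r_q, r_s \ge \hat r$, the bracketed quantity is at least $(\alpha-\beta) - 3/\sqrt{\hat r}$, which is strictly positive precisely when $\hat r > 9/(\alpha-\beta)^2$. Applying the union bound over the (at most $k^2$) pairs $q\neq s$, and noting that $\tilde p_1$ and $\tilde p_3$ are exponentially small in a power of $\hat r$ while $k$ is polynomially bounded in $N$, the claimed inequality holds for all $q\neq s$ simultaneously with probability tending exponentially to $1$ as $\hat r\to\infty$. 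I do not anticipate a genuine obstacle here; the only point requiring care is to choose the Hoeffding parameters so that the fluctuation constants sum to exactly $3/\sqrt{\hat r}$, which is what matches the threshold $9/(\alpha-\beta)^2$ in the statement.
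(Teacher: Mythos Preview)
Your proposal is correct and follows essentially the same approach as the paper's own proof: the paper likewise invokes the concentration bounds \eqref{e: bi term-2} and \eqref{v3 bound} to obtain $\e^T W_{C_q,C_q}\e \ge \alpha r_q^2 - 2r_q^{3/2}$ and $\e^T W_{C_q,C_s}\e \le \beta r_q r_s + r_q\sqrt{r_s}$, combines them to get $r_s\e^T W_{C_q,C_q}\e - r_q\e^T W_{C_q,C_s}\e \ge r_q^2 r_s(\alpha-\beta - 2r_q^{-1/2} - r_s^{-1/2}) \ge r_q^2 r_s(\alpha-\beta - 3\hat r^{-1/2})$, and finishes with the union bound. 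The only cosmetic difference is that the paper fixes $r_q\le r_s$ without loss of generality, which is unnecessary for the argument.
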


\begin{proof}
	Fix $q \neq s$ such that $r_q \le r_s$.
	Combining \eqref{e: bi term-2} and \eqref{v3 bound}
	shows that
	\begin{align*}
		r_s \e^T W_{C_q, C_q} \e - r_q \e^T W_{C_q, C_s} \e  &\ge r_s r_q^2( \alpha - \beta - 2 r_q^{-1/2} - r_s^{-1/2}) 
			\ge r_s r_q^2(\alpha - \beta - 3 \hat r^{-1/2})
	\end{align*}
	with probability at least $1- \tilde p_1 - \tilde p_3$. Noting that this lower bound is positive if $\hat r > 9/(\alpha-\beta)^2$ and
	applying the union bound over all choices of $q$ and $s$ completes the proof.
	\qed
\end{proof}

We have shown that $\mu, \lambda, \eta$ constructed according to \eqref{e: mu def}, \eqref{e: lambda def}, and \eqref{e: eta def} are dual feasible for \eqref{E: w trace relax}
and the uniqueness condition \eqref{a: block weights} is satisfied
with \pteto{\hat r}.			
In the next subsection, we derive an upper bound on
the norm of $\tilde S$ and use this bound to obtain conditions ensuring dual feasibility of $S$ and, hence,
optimality of $X^*$ for \eqref{E: w trace relax}.

\subsection{An upper bound on $\|\tilde S\|$}
In this section, we derive an upper bound on $\|\tilde S\|$, which will
be used to verify that the conditions on the partition $C_1, \dots, C_{k+1}$ 
imposed by \eqref{A: wkdc guarantee bound}
ensure that the \kdcs of $K_N$ composed of the cliques $C_1, \dots, C_k$ is the
unique maximum density \kdc of $K_N$ with respect to $W$ and can be
recovered by solving \eqref{E: w trace relax} with \pteto{\hat r}.
In particular, we will prove the following lemma.

\begin{lemma} \label{thm: S bound}	
	There exist scalars $\rho_1, \rho_2 > 0$ such that
	\begin{equation} \label{eqn: S bound}
		\| \tilde S\| \le \rho_1 \sqrt{N}
			+ \rho_2 \sqrt{ k r_{k+1}} + \beta r_{k+1}
	\end{equation}
	with probability tending exponentially to $1$ as $\hat r$ approaches $\infty$.	
\end{lemma}

This lemma, along with 
Theorem~\ref{T: S opt conds}, Lemma~\ref{T: lambda bound}, and 
Corollary~\ref{C: eta nonneg}, establishes Theorem~\ref{T: weighted kdc results}.
Indeed, if the right-hand side of \eqref{eqn: S bound} is bounded above by $O (\gamma \hat r)$ 
then Theorem~\ref{T: S opt conds}, Lemma~\ref{T: lambda bound}, and 
Corollary~\ref{C: eta nonneg} imply that the \kdcs given by $C_1, \dots, C_k$ is the densest \kdcs
corresponding to $W$ and can be recovered by solving \eqref{E: w trace relax}.

The remainder of this section consists of a proof of Lemma~\ref{thm: S bound}.
We decompose $\tilde S$ as
$
     \tilde S = \tilde S_1 + \tilde S_2 + \tilde S_3
$
where $\tilde S_i \in \Sigma^N$, $i=1,\dots, 3$, are $(k+1)$ by $(k+1)$ block matrices such that
\begin{align*}
     \tilde S_1(C_q, C_s) &= \E[W] - W \\
     \tilde S_2(C_q, C_s) &= \left\{ \begin{array}{ll}
     				(\lambda_{C_q} - \E[\lambda_{C_q}]) \e^T, &\mbox{if } s = k+1 \\
     				\e (\lambda_{C_s} - \E[\lambda_{C_s}])^T, &\mbox{if } q =  k+1 \\
                                             0 & \mbox{otherwise} \\
                                   \end{array}
                                   \right. \\
    \tilde S_3(C_q, C_s) &= \left\{ \begin{array}{ll}
                                             -\beta \e\e^T,&\mbox{if } q = s = k+1 \\
                                             0, & \mbox{otherwise.} \\
                                   \end{array}
                                   \right.
\end{align*}
To bound the norm of each matrix in this decomposition, we will make repeated use of the following bound
on the norm of  a random symmetric matrix (see \cite{Furedi-Komlos:1981}, \cite[Theorem~1]{Ames-Vavasis}).

\begin{theorem}
    \label{Furedi-Komlos}
        Let $A \in \Sigma^n$ be a random symmetric matrix with i.i.d.~entries sampled from a distribution 
        with mean $\mu$ and variance $\sigma^2$ such that $A_{ij} \in [0,1]$ for all $i,j \in \{1,\dots, n\}$.
        Then
	$
            	\| A - \mu \e\e^T \| \le 3 \sigma\sqrt{n}
	$
	with probability at least $1 - \exp(-c  n^{1/6})$ where $c$ depends only on $\sigma$.
\end{theorem}


We are now ready to compute the desired bound on $\|\tilde S\|$.
By Theorem~\ref{Furedi-Komlos}, there exist $\rho_1 > 0$ such that
$
	\|\tilde S_1 \| \le \rho_1 \sqrt{N}
$
with \pteto{N}.
Morever, we have
$
	\|\tilde S_3\| = \beta \|\e\e^T\| = \beta r_{k+1}.
$
It remains to obtain an upper bound on $\|\tilde S_2\|$.

Note that $\| \tilde S_2 \| \le 2 \|\lambda - \E[\lambda] \| \sqrt{r_{k+1}} $ by the triangle inequality.
Recall that
$$
	\lambda_{C_q} - \E[\lambda_{C_q}] = \frac{1}{r_q} \rbra{ \rbra{ W_{C_q, C_q} \e - \alpha r_q \e } - \frac{1}{r_q} \rbra{ \e^T W_{C_q, C_q} \e - \alpha r_q^2 } \e }
$$
for all $q = 1,\dots, k$.
Applying Theorem~\ref{Furedi-Komlos}, there exists $\varphi > 0$ such that
$$
	\| W_{C_q, C_q} \e - \alpha r_q\e \| \le \| W_{C_q, C_q} - \alpha \e\e^T \| \|\e\| \le \varphi r_q
$$
with \pteto{\hat r}.
On the other hand,
\eqref{e: bi term-2} implies that $|\e^T W_{C_q, C_q} \e - \alpha r_q^2 | \le 2 r_q^{3/2}$ with probability at least $1 - \tilde p_1$.
It follows that there exists scalar $\rho_2 > 0$ such that
$
	\|\lambda_{C_q} - \E[ \lambda_{C_q} ] \|^2 \le \rho_2^2/4
$
for all $q = 1,\dots, k$
with \pteto{\hat r}. Therefore,
$
	\|\lambda - \E[\lambda]\|^2 = \sum_{q=1}^k \|\lambda_{C_q} - \E[ \lambda_{C_q} ] \|^2 \le k \rho_2^2/4
$
with high probability, as required. This completes the proof of Lemma~\ref{thm: S bound}.

\section{Proof of Theorem~\ref{thm: biclustering recovery guarantee}}
\label{sec: kdb proof}
\subsection{Optimality conditions and choice of multipliers}
\label{sec: b opt conds}
We provide of a sketch of the proof of Theorem~\ref{thm: biclustering recovery guarantee} here; many of the technical details are identical to those in the proof of Theorem~\ref{T: weighted kdc results} and are omitted.
As before, we will establish that a proposed solution satisfies a set of sufficient conditions for optimality for \eqref{eq: WKDB relaxation}, given by the following theorem,
with high probability if the input graph satisfies the assumptions of Theorem~\ref{thm: biclustering recovery guarantee}.

\begin{theorem}
	\label{thm:  WKDB KKT conditions}
	Let 	$Z$
	be feasible for \eqref{eq: WKDB relaxation} and suppose that 
	there exist some $\mu_1, \mu_2 \in \R$, $\lambda \in \R^M_+$, $\phi \in \R^N_+$, $\eta \in \R^{(M+N)\times (M+N)}_{+}$
	and $S \in \Sigma^{M+N}_+$ such that
	\begin{align}
		\mat{{cc} \mu_1 I +\lambda \e^T + \e \lambda^T & -W \\ - W^T & \mu_2 I + \phi \e^T  + \e \phi^T}		
		 - \eta  &= S \label{eq: WKDB dual feas} \\
		\lambda^T (Z_{U,U} \e - \e) &= 0  \label{eq: WKDB CS U rowsum} \\		
		\phi^T (Z_{V,V} - \e) &=0 \label{eq: WKDB CS V rowsum} \\
		\tr( Z \eta) &= 0 \label{eq: WKDB CS nonneg} \\
		\tr(Z S)  &= 0. \label{eq: WKDB CS sdp}
	\end{align}
	Then $Z$ is optimal for \eqref{eq: WKDB relaxation}.
\end{theorem}

Let $(U_1,V_1)$, \dots, $(U_k,V_k)$ denote the vertex sets of  the \kdb subgraph $G^*$ of the 
bipartite complete graph $K_{M,N} = ((U,V),E)$ with vertex sets $U$ and $V$ of size $M$ and $N$ respectively.
Let $U_{k+1} := U \setminus (\cup_{i=1}^k U_i)$ and $V_{k+1} := V \setminus (\cup_{i=1}^k V_i)$.
Let $W \in \R^{M\times N}$ be a random nonnegative matrix sampled from the planted bicluster model
according to distributions $\Omega_1, \Omega_2$ with means $\alpha, \beta$.
Let $m_i := |U_i|$, $n_i := |V_i|$ for all $i=1,\dots, k+1$, and let $\hat m = \min_{i=1,\dots,k} m_i$, $\hat n = \min_{i=1,\dots,k} n_i$.
Let $C_i := U_i \cup V_i$ and let $r_i := |C_i| = m_i + n_i$ for all $i=1,\dots, k+1$.
We assume that $m_i$ is equal to a scalar multiple $\tau_i^2$ of $n_i$ for all $i \in \{1,\dots, k+1\}$. That is,
$m_i = \tau_i^2 n_i$ for some $\tau_i >0$ for all $i=1,\dots, k+1$.


As before, we establish optimality of $Z^*$ by constructing dual multipliers satisfying the assumptions of Theorem~\ref{thm:  WKDB KKT conditions}.
The matrix $S$ and, hence, $\lambda$, $\phi$, and $\eta$ will be constructed in blocks indexed by the vertex sets
$U_1,\dots, U_{k+1}$ and $V_1,\dots, V_{k+1}$.
Note that the diagonal blocks of $Z_{U,U}^*$ indexed by $U_1,\dots, U_k$ consist of multiples of the all-ones matrix and the remaining
blocks are equal to $0$. 
Therefore, $\lambda_{U_{k+1}} =0$ by \eqref{eq: WKDB CS U rowsum}.
Similarly, the block structure of $Z^*$ implies that $\phi_{V_{k+1}} = 0$ by \eqref{eq: WKDB CS V rowsum} and
$\eta_{C_q, C_q} = 0$ for all $q=1,\dots, k$ by \eqref{eq: WKDB CS nonneg}.

Since both $S$ and $Z^*$ are assumed to be positive semidefinite matrices, the complementary slackness condition, $\tr(Z^*S) = 0$, is equivalent to requiring
the columns of $Z^*$ to reside in the nullspace of $S$.
For each $q \in \{1,\dots, k\},$ we choose $\lambda_{U_q}$ so that $S_{U_q, C_q}$ is orthogonal to $Z^*_{U_q, C_q}$.
In particular, it suffices to choose $\lambda$ such that
\begin{equation} \label{eq: lambda eq}
	0 	= S_{U_q, U_q} \e +  \tau_q S_{U_q, V_q} \e
		= \mu_1 \e + m_q \lambda_{U_q} + (\lambda_{U_q}^T \e) \e - \tau_q W_{U_q, V_q} \e
\end{equation}
for all $q=1,\dots,k$.
Rearranging \eqref{eq: lambda eq} shows that $\lambda_{U_q}$ is the solution to the system
\begin{equation} \label{eq: lambda mat vec eq}
	(m_q I + \e\e^T) \lambda_{U_q} = \tau_q W_{U_q, V_q} \e - \mu_1 \e
\end{equation}
for all $q=1,\dots,k$.
As before, the Sherman-Morrision-Woodbury formula yields an explicit formula for $\lambda$;
for each $q \in \{1,\dots, k\}$,
applying \eqref{eq: SMW vector formula} with $A = m_q I$, $\u = \bv = \e$ shows that
\begin{equation}	\label{eq: choice of lambda}
	\lambda_{U_q} = \frac{1}{m_q} \left( \tau_q W_{U_q, V_q} \e - \frac{1}{2} \left( \mu_1 + \frac{\e^T W_{U_q,V_q} \e}{\tau_q n_q} \right) \e \right).
\end{equation}
Similarly, choosing
\begin{equation}	\label{eq: choice of phi}
	\phi_{V_q} = \frac{1}{n_q} \left( \frac{W_{U_q, V_q}^T \e}{\tau_{q}} - \frac{1}{2} \left(\mu_2 + \frac{\e^T W_{U_q,V_q} \e}{\tau_q n_q} \right) \e \right)
\end{equation}
forces the rows of $S_{V_q, C_q}$ to be orthogonal to the columns of $Z^*_{C_q, C_q}$ for all $q\in \{1,\dots,k\}$.
Note that 
$	\E [\lambda_{U_q} ] 
		= (\alpha/(2\tau_q) - \mu_1/(2m_q) )\e$
for all $q \in \{1,\dots, k\}$.
We choose $\mu_1 = \epsilon \gamma \hat m$ 
for some scalar $\epsilon > 0$ to be defined later to ensure that  $\lambda$ is nonnegative in expectation.
Similarly,
$	\E [ \phi_{V_q} ] = \left( \alpha \tau_q /2-  \mu_2/(2n_q) \right) \e$
for all $q=1,\dots, k$.
Again,  we choose $\mu_2 = \epsilon \gamma  \hat n$ for small enough $\epsilon > 0$ to ensure that $\phi$ is nonnegative in expectation.

We next construct the multiplier $\eta$.
We set $\eta_{C_{k+1}, C_{k+1}} = 0$ and
parametrize $\eta_{C_q, C_s}$ using the vectors $\y^{q,s}$ and $\z^{q,s}$
for each $q\neq s$.
For each $q = 1,\dots, k+1$, let $\w_q$ be the vector in $\R^{C_q}$ such that $\w_q(U_q) = \e$ and $\w_q(V_q) = \tau_q \e$.
We choose
$
	\eta_{C_q, C_s} = \Pi^{q,s} + \y^{q,s} \w_s^T + \w_q(\z^{q,s})^T
$
, where
$$
	\Pi^{q, s} = \mat{{cc} \pi_{U_q, U_s} \e\e^T & \tau_s \pi_{U_q, V_s} \e\e^T \\ \tau_q \pi_{V_q, U_s} \e\e^T & \tau_q \tau_s  \pi_{V_q, V_s} \e\e^T}
$$
for some scalars $\pi_{U_q,U_s}, \pi_{U_q, V_s}, \pi_{V_q, U_s}, \pi_{V_q, V_s} > 0$ to be defined later.
As before, we choose $\y^{q,s}$ and $\z^{q,s}$ to be solutions of the system of equations given by
$S_{C_q, C_s} Z^*_{C_s, C_s} = 0$ and $S_{C_s, C_q} Z^*_{C_q, C_q} = 0$.
By the symmetry of $S$ and $Z^*$, $\y^{q,s} = \z^{s,q}$ for all $q \neq s$.

For all $q,s \in \{1,\dots,k+1\}$ such that $q\neq s$,  let
\begin{equation} \label{eq: Sbar def}
	\bar S_{C_q, C_s} :=	\mat{{cc}
							 \lambda_{U_q}\e^T + \e \lambda_{U_s}^T & - W_{U_q, V_s} \\
							 -W_{U_s, V_q}^T 						& \phi_{V_q} \e^T + \e \phi_{V_s}^T },
\end{equation}
and let $\b = \b^{q,s} \in \R^{C_q \cup C_s}$ be the vector defined by
$\b_{C_q} = \big(\bar S_{C_q,C_s}  -  \E[\bar S_{C_q, C_s}] \big)  \w_s$ and
$\b_{C_s} = \big( \bar S_{C_s, C_q} - \E[\bar S_{C_s, C_q}] \big) \w_q.$
The parameters $\pi_{U_q,U_s}, \pi_{U_q, V_s}, \pi_{V_q, U_s}, \pi_{V_q, V_s} > 0$ will be chosen so that
\begin{equation}	\label{eq: c system1}
	\big(\E [\bar S_{C_q, C_s}] - \Pi ^{q,s} \big) \w_s= 0 \;\; \mbox{and} \;\;
	\big( \E[\bar S_{C_s, C_q}] - \Pi^{s,q} \big) \w_q = 0.
\end{equation}
We will establish that such a choice of $\Pi^{q,s}$ exists in Lemma~\ref{lem: bound on  S2}.

Fix $q,s \in \{1,\dots, k\}$ such that $q\neq s$. It is easy to see that
the requirement that the rows of $S_{C_q, C_s}$ be orthogonal to the columns of $Z^*_{C_s, C_s}$ is satisfied if $\y=\y^{q,s}$ and $\z = \z^{q,s}$ are chosen to be
be the unique solutions of the system
\begin{equation} 	\label{eq: yz system t=1}
	\mat{{cc}
		2 m_s  I + \w_q \w_q^T 	&   0 \\
		0 & 2m_q I +  \w_s \w_s^T}
	{ \y \choose \z} 
	=
	\b.
\end{equation}
Applying \eqref{eq: SMW vector formula} with $A = 2 m_s$, $\u = \bv = \w_q$ and $A = 2 m_q$, $\u=\bv = \w_s$ yields
$$
	\y =  \frac{1}{2m_s} \rbra{ I - \frac{\w_q \w_q^T}{2(m_q + m_s) } } \b_{C_q}  \;\; \mbox{and} \;\;
	\z = \frac{1}{2 m_q} \rbra{ I - \frac{ \w_s \w_s^T}{2 (m_q + m_s) } } \b_{C_s}
$$
respectively.

For $q \in \{1,\dots, k\}$, we set $\z^{k+1, q } = 0$ and choose $\y = \y^{k+1,q}$  so that the rows of $S_{C_{k+1}, C_q}$ are orthogonal to $\w_q$.
By our choice of $\Pi^{k+1, q}$, $\y$ must satisfy
$$
	2 m_q \y 
		= \mat{{cc}	\e (\lambda_{U_q} - \E[\lambda_{U_q} ])^T & - W_{U_{k+1}, V_q} + \beta \e\e^T \\
					\beta \e\e^T - W_{U_q, V_{k+1}}^T & \e (\phi_{V_q} - \E[ \phi_{V_q} ] ) }
			\w_q
		= \b^{k+1, q} 
$$
Therefore, we choose 
$	\y^{k+1, q} = (1/(2mq))\b^{k+1,q} 	\label{eq: y noise def V}.$
We choose the remaining blocks of $\eta$ symmetrically. That is, we choose $\y^{q,k+1} = 0$ and set $\z^{q,k+1} = \y^{k+1, q}$ for all $q=1,\dots, k$.

To establish that $S$ is positive semidefinite with high probability, we decompose $S$ as the sum $S = S_1 + S_2 + S_3 + S_4$ where
\begin{align}
	\label{eq: S1 def}
		S_1(C_q, C_s) &:= \branchdef{  		 S_{C_{k+1}, C_{k+1}}, & \mbox{if } q = s = k+1 \\
										\bar S_{C_q, C_s} - \E[\bar S_{C_q, C_s} ], & \mbox{otherwise,} }
	\\	\label{eq: S2 def}
		S_2(C_q,C_s) &:= \branchdef{ \E [ \bar S_{C_q, C_s} ] - \Pi^{q,s}, & \mbox{if } q \neq s  \\ \E[\bar S_{C_q,C_q} ], & \mbox{if } q = s, q \in \{1,\dots, k\}, \\ 0, & \mbox{otherwise,} } 
	\\	\label{eq: S3 def}
		S_3(C_q, C_s) &:= \branchdef{
							 \y^{q,s} \w_s^T + \w_q (\z^{q,s})^T  , &\mbox{for all } q,s \in \{1,\dots, k+1\}  
							 }
\end{align}						
and
\begin{equation}	\label{eq: S4 def}
		S_4 := \mat{{cc} \mu_1 I & 0 \\ 0 & \mu_2 I }.
\end{equation}

We conclude with the following theorem, which provides a sufficient condition for optimality and uniqueness of the proposed solution $Z^*$ for \eqref{eq: WKDB relaxation}.

\begin{theorem} \label{thm: WKDB sufficient}
	Let $Z^*$ be the feasible solution for \eqref{eq: WKDB relaxation} corresponding to $G^*$ defined by \eqref{eq: WKDB sol}.
	Then there exist scalars $\xi_1, \xi_2 > 0$ such that if
	\begin{equation} \label{eq: S1 bound cond}
		\|S_1\| + \xi_1 (n_{k+1} N)^{1/2}\le \xi_2 \gamma \hat n
	\end{equation}
	then $Z^*$ is the unique optimal solution of \eqref{eq: WKDB relaxation}
	and $G^*$ is the unique densest \kdbs of $K_{M,N}$ with \pteto{\hat n}.

\end{theorem}

The remainder of this section consists of a proof of Theorem~\ref{thm: WKDB sufficient}.
We first establish that $Z^*$ is optimal for \eqref{eq: WKDB relaxation} and $G^*$ is the unique densest \kdbs of $K_{M,N}$ with \pteto{\hat n}
if \eqref{eq: S1 bound cond} is satisfied.
By construction, $\mu, \lambda,\phi, \eta$ and $S$ satisfy \eqref{eq: WKDB dual feas}, \eqref{eq: WKDB CS U rowsum}, \eqref{eq: WKDB CS V rowsum},
\eqref{eq: WKDB CS nonneg}, and \eqref{eq: WKDB CS sdp}. 
Moreover, a series of arguments similar to those in Section~\ref{sec: c nonneg} establish that $\lambda,\phi$, and $\eta$ are nonnegative
with \pteto{\hat n}.
Therefore, it suffices to show that $S$ is positive semidefinite with \pteto{\hat n} if  \eqref{eq: S1 bound cond} is satisfied.
To do so, we will establish that $\x^T S \x \ge 0$ for all $\x \in \R^{M+N}$ in this case.

Fix $\x \in \R^{M+N}$. We decompose $\x$ as $\x = \sum_{i=1}^k \varphi_i \x_i + \bar\x$ 
for some $\varphi_1, \dots, \varphi_k$, where
$ \x_i (C_i) = \w_i$ and all remaining entries of $\x_i$ are equal to $0$,
and $\bar\x$ is orthogonal to the span of $\{\x_1, \dots, \x_k\}$.
Note that $\spn\{\x_1, \dots, \x_k\} \subseteq \Null{S}$
since $\x_i$ is a scalar multiple of a column of $Z^*$ for all $i=1,\dots, k$.
It follows that
\begin{equation}	\label{eq: xSx expansion}
	\x^T S \x = \bar\x^T S \bar\x = \sum_{i=1}^{4} \bar\x^T  S_i \bar\x.
\end{equation}
Note that $\bar\x^T S_3 \bar\x = 0$ since $\bar\x(C_q)$ is orthogonal to $\w_q$ for all $q = 1, \dots, k$ and
$
	\bar\x^T  S_4 \bar\x \ge \min \{\mu_1, \mu_2\} \| \bar\x\|^2
		= \epsilon \gamma\min \{\hat m, \hat n \} \| \bar\x \|^2.
$
by our choice of $\mu_1$ and $\mu_2$.
The following lemma provides a similar lower bound on $\bar\x^T  S_2 \bar\x$.


\begin{lemma}	\label{lem: bound on  S2}
	Suppose that $\alpha, \beta, \tau_1, \dots, \tau_{k+1}$ satisfy \eqref{eq: a twice b} and \eqref{eq: abd assumption}.
	Then, for all $q,s \in \{1,\dots, k + 1\}$ such that $q \neq s$,
	there exist scalars $\pi_{U_q,U_s}, \pi_{U_q, V_s}, \pi_{V_q, U_s}, \pi_{V_q, V_s} > 0$ and $\hat c>0$,
	depending only on $\alpha, \beta, \tau_1,\dots, \tau_{k+1}$ such that 
	$
		\bar\x^T S_2 \bar\x \ge -  \hat c\|\bar\x \|^2\sqrt{n_{k+1} N} 
	$
	and \eqref{eq: c system1} is satisfied.
\end{lemma}

\begin{proof}
	Fix $q, s \in \{1,\dots, k\}$ such that $q\neq s$.
	Let $\pi_1 :=\pi_{U_q,U_s},$ $\pi_2:= \pi_{U_q,V_s}$, $\pi_3 := \pi_{V_q, U_s}$, and $\pi_4 :=\pi_{V_q, V_s}$.
	Then the system of equations defined by \eqref{eq: c system1}  
	is equivalent to 
	\begin{equation}	\label{eq: c system}
		\mat{{cccc}	1	& 1 	& 0 	& 0 \\
				0	& 0	& 1 	& 1 \\
				1	& 0	&1 	& 0 \\
				0	& 1	& 0	& 1 }
		\vect{ \pi_1 \\ \pi_2 \\ \pi_3 \\ \pi_4 }
		= 
		\vect{ \bar \lambda - \beta/ \tau_s \\ \bar \phi - \tau_s \beta \\ \bar \lambda - \beta/\tau_q \\ \bar \phi - \tau_q \beta},
	\end{equation}
	where
	\begin{equation}	\label{eq: b lambda, b phi def}
		\bar \lambda := \frac{\alpha}{2} \left( \frac{1}{\tau_q} + \frac{1}{\tau_s}\right) - \frac{\mu_1}{2} \left( \frac{1}{m_q} + \frac{1}{m_s} \right) ,\;\;\;\;\;
		\bar \phi := \frac{\alpha}{2} ( \tau_q + \tau_s) - \frac{\mu_2}{2} \left( \frac{1}{n_q} + \frac{1}{n_s} \right).
	\end{equation}
	The system \eqref{eq: c system} is singular with solutions of the form
	\begin{align} \label{eq: c1 formula} 
			\pi_1 &= \bar \lambda - \frac{ \bar \phi}{\tau_q \tau_s} + \pi_4, \;\;\;\;			 
			\pi_2 = 	\frac{ \bar \phi}{\tau_q \tau_s}  - \frac{\beta}{\tau_s} - \pi_4, \;\;\;\;
			\pi_3 = \frac{ \bar \phi}{\tau_q \tau_s}  - \frac{\beta}{\tau_q} - \pi_4.
	\end{align}
	We next show that there exists some choice of $\pi_4 > 0$, independent of $\hat n$, such that
	the desired bound on $\bar\x^T S_2 \bar\x$  holds and	$\pi_1,\pi_2, \pi_3$ are bounded below by a positive scalar whenever  \eqref{eq: a twice b} and \eqref{eq: abd assumption} are satisfied.
	
	Suppose that $\alpha,$ $\beta$, $\tau_1, \dots, \tau_{k+1}$ satisfy \eqref{eq: a twice b} and \eqref{eq: abd assumption}.
	Let $\pi_4 := ( \rho_1 \bar\phi - \rho_2 \beta ) /\tau_q \tau_s$
	for some $\rho_1, \rho_2 > 0$ to be chosen later.
	For $\pi_4$ to be strictly positive, we need 
	$
		\rho_2 \beta < \rho_1 \bar \phi.
	$
	Substituting our choice of $\pi_4$ into the formulas for $\pi_2$ and $\pi_3$  given by \eqref{eq: c1 formula}
	and rearranging shows that $\rho_1$ and $\rho_2$ must satisfy
	\begin{equation}	\label{eq: p2 B lower bound}
		\rho_2 \beta > \beta \max\{\tau_q, \tau_s \} + (\rho_1 - 1) \bar \phi
	\end{equation}
	for $\pi_2, \pi_3$ to be positive.
	When \eqref{eq: abd assumption} is satisfied
	$$
		\bar \phi - \beta \max\{\tau_q, \tau_s\}  \ge \frac{\alpha}{2}(\tau_q + \tau_s)  - \beta\max\{\tau_q,\tau_s\} - \epsilon \gamma  > 0
	$$
	for sufficiently small $\epsilon > 0$ in  our choice of $\mu_1$ and $\mu_2$.
	Therefore, we choose $\rho_2$ such that
	$$
		\rho_2 = \rho_1 \bar \phi - \kappa (\beta \max \{\tau_q, \tau_s\} - \bar \phi\}
	$$
	for some $\kappa \in (0,1)$.
	Then
	$
	\pi_4 = \kappa (\bar \phi - \beta \max\{\tau_q, \tau_s \} )
	$
	is bounded below by a positive scalar, depending only on $\alpha, \beta, \tau_q,$ and $\tau_s$ by our choice of $\mu_2$.
	Since our choice of $\rho_1,\rho_2$ satisfies \eqref{eq: p2 B lower bound}, $\pi_2, \pi_3$
	are also bounded below by a positive scalar.
	Finally, since $\pi_4$ is at least a positive scalar,  we can always take $\epsilon > 0$ small enough
	that  $\pi_1$ is also
	bounded below by a positive scalar depending only on $\alpha, \beta, \tau_q$ and $\tau_s$.
	The case when $q \in \{1,\dots, k\}$ and $s = k+1$ follows by an identical argument.

	It remains to show that this particular choice of $\Pi$ yields the desired lower bound on $\bar \x^T S_2 \bar \x$.
	Let $\u_q:= \bar\x(U_q)$ and  $\bv_q := \bar\x(V_q)$ denote the entries of $\bar\x$ indexed by $U_q$ and $V_q$ respectively, for all $q=1,\dots , k+1$.
	For all $q=1,\dots, k$, we have
	$
		\u_q^T \e = - \tau_q \bv_q^T \e
	$
	since $\bar\x$ is orthogonal to $\spn\{\x_1, \dots, \x_k\}$.
	Fix $s \in \{1,\dots, k\}$.
	By our choice of $\pi_1^{k+1,s},$ $\pi_2^{k+1,s},$ $\pi_3^{k+1,s},$ and $\pi_4^{k+1,s}$ we have
	\begin{align*}
		S_2(C_{k+1}, C_s) = S_2(C_s, C_{k+1})^T  &=
			\frac{1}{2} \mat{{cc} (\bar\lambda_{k+1, s} + \beta/\tau_s) \e\e^T & - (\tau_s \bar \lambda_{k+1,s} + \beta) \e\e^T \\
							- (\bar \phi_{k+1, s}/\tau_s + \beta ) \e\e^T &  ( \bar \phi_{k+1,s} + \tau_s \beta ) \e\e^T }  \\
		& = \frac{1}{2}  {(\bar \lambda_{k+1,s} +  \beta/\tau_s )\e \choose -(\bar \phi_{k+1, s} /\tau_s + \beta ) \e} (\e^T \; - \tau_s\e^T) .
	\end{align*}
	It follows that
	\begin{align*}
		\sum_{s=1}^k &\bar\x(C_{k+1})^T S_2 (C_{k+1}, C_s) \bar\x (C_s) \\			
			&\ge - \frac{1}{2}\sum_{s=1}^k \max \bbra{ \bar\lambda + \frac{\beta}{\tau_s}, \frac{\bar \phi}{\tau_s} + \beta } \|\bar\x(C_{k+1})  \|_1 \rbra{ \|\u_s\|_1 + \tau_s \|\bv_s\|_1 } \\
			&\ge= -\hat c \|\bar\x(C_{k+1})\|_1 \rbra{ \|\bar\x \|_1 - \|\bar\x(C_{k+1})\|_1 },
	\end{align*}
	where $\tau_{\min} := \min_{i=1,\dots,k } \tau_i$, $\tau_{\max} := \max_{i =1 ,\dots, k} \tau_i$, and
	$$
		\hat c :=  \frac{1}{2}\rbra{ \frac{\alpha}{2} + \beta } \rbra{ \frac{\max\{\tau_{\max}, 1\}} { \min\{\tau_{\min} , 1\} }  }.
	$$
	The optimization problem
	$$
		\max_{\w_1 \in \R^{\ell_1}, \w_2 \in \R^{\ell_2}} \big\{ \|\w_1\|_1 \|\w_2\|_1 : \|\w_1\|^2 + \|\w_2\|^2 = \Psi^2 \big\}
	$$
	has optimal solution $\w_1^* = (\Psi/\sqrt{2\ell_1}) \e$, $\w_2^* = (\Psi/\sqrt{2\ell_2}) \e$, with optimal  value 
	$ \Psi^2 \sqrt{\ell_1 \ell_2} /2$.
	Taking $\w_1 := \bar\x(C_{k+1})$ and $\w_2 = (\bar\x(C_1) ; \dots ; \bar\x(C_k))$ and $\Psi = \|\bar\x\|$, shows that
	$$
		\|\bar\x(C_{k+1})\|_1 \big( \|\bar\x \|_1 - \|\bar\x(C_{k+1})\|_1 \big) \le \frac{\|\bar\x \|^2}{2}\sqrt{r_{k+1} (N - r_{k+1} ) } 
	$$
	and, consequently,
	\begin{equation}	\label{eq: S2 noise blocks}
		\sum_{s=1}^k \bar\x(C_{k+1})^T S_2 (C_{k+1}, C_s) \bar\x (C_s)  \ge  - \frac{\hat c \|\bar\x \|^2}{2}\sqrt{r_{k+1} N} .
	\end{equation}		
	Similarly,	
	\begin{align}
		\bar\x&(C_q)^T  S_2(C_q, C_q) \bar\x(C_q)  = (\bv_q^T\e)^2 \left( 4 \tau_q \alpha - \frac{ \mu_1 + \mu_2}{n_q }\right) \label{eq: x S2 x diag block}
	\end{align}
	for all $q = 1, \dots, k$.
	Finally, for $q,s \in \{1,\dots, k\}$ such that $q \neq s$, we have
	\begin{align}
		&\bar\x(C_q)^T  S_2(C_q, C_s) \bar\x(C_s)  \notag \\
			&= (\bv_q^T \e)(\bv_s^T\e) \Big( \tau_q \tau_s \bar\lambda^{q,s} + \beta ( \tau_q + \tau_s ) + \bar \phi^{q,s} - \tau_q \tau_s (\pi_1^{q,s} - \pi_2^{q,s} - \pi_3^{q,s} + \pi_4^{q,s} ) \Big)	\label{eq: x S2 x off-diag block 1} \\
			&=  4 (\bv_q^T \e)(\bv_s^T\e) (\bar \phi^{q,s} - \tau_q \tau_s \pi_4^{q,s} ).	\label{eq: x S2 x off-diag block 2}
	\end{align}
	Here \eqref{eq: x S2 x off-diag block 2} is obtained by substituting \eqref{eq: c1 formula},
	into \eqref{eq: x S2 x off-diag block 1} .
	Let  $\bar v_q := \bv_q^T \e$ for all $q=1,\dots, k$.
	Combining  \eqref{eq: S2 noise blocks}, \eqref{eq: x S2 x diag block} and \eqref{eq: x S2 x off-diag block 2} shows that
	\begin{align*}
		&\bar\x^T   S_2 \bar\x  \\
			&\ge - \hat c\|\bar\x \|^2\sqrt{r_{k+1} N} + \sum_{q=1}^k  \bar v_q^2 \rbra{ \tau_q \alpha - \frac{\mu_1 + \mu_2}{n_q} }+
				 2 \sum_{q=1}^k \sum_{s = q+1}^k 4 \bar v_q \bar v_s ((1-\kappa)\bar\phi^{q,s} + \kappa \beta \tilde \tau_{qs} ) \\
			 &\ge  - \hat c\|\bar\x \|^2\sqrt{r_{k+1} N}  + 8 \sum_{q=1}^k \sum_{s = q+1}^k  |\bar v_q  \bar v_s |
					\rbra{\alpha \tau_{\min}  - \frac{\mu_1 + \mu_2}{4\hat n} - (1-\kappa) \bar\phi^{q,s} - \kappa \beta \tilde \tau_{qs}},
	\end{align*}
	where $\tilde \tau_{qs} := \max \{\tau_q, \tau_s\}$, since $\sum_{q=1}^k \bar v_q^2 \ge \sum_{q\neq s} |\bar v_q \bar v_s |$.	
	If $\alpha \tau_{\min} > \beta \tau_i$ for all $i=1,\dots, k$ then, for all $\epsilon > 0$ sufficiently small and $\kappa$ sufficiently close to $1$, we have
	\begin{align*}
		\alpha \tau_{\min} & - \frac{\mu_1 + \mu_2}{4\hat n} - (1-\kappa) \bar\phi^{q,s} - \kappa \beta \max \{\tau_q, \tau_s\}  \\
		& \ge \alpha \tau_{\min} - \beta \max\{\tau_q, \tau_s\} - (1-\kappa)(\alpha-\beta) \max\{\tau_q,\tau_s\}
			- \frac{\epsilon \gamma  }{4} \left( \frac{\hat m}{\hat n} + 1 \right) \ge 0		
	\end{align*}
	for all $q\neq s$.
	It follows immediately that $\bar\x^T  S_2 \bar\x \ge - \hat c\|\bar\x \|^2\sqrt{r_{k+1} N} .$ 
	\qed
	\bigskip
\end{proof}


Substituting the respective bounds on $\bar \x^T S_i \bar \x$ into \eqref{eq: xSx expansion} shows that
\begin{equation} \label{eq: xSx lower bound}
	\bar\x^T S \bar\x \ge \left(\min\{\mu_1, \mu_2\} - \hat c \sqrt{r_{k+1}  N }- \|S_1\| \right) \|\bar\x\|^2.
\end{equation}
Since $\mu_1, \mu_2$ are both  scalar multiples of $\hat n$, where the scalar depends only on $\alpha, \beta, \tau_1,\dots, \tau_{k+1}$, there exists
scalar $\xi > 0$, also depending only $\alpha, \beta, \tau_1,\dots, \tau_{k+1}$, such that 
the right-hand side of \eqref{eq: xSx lower bound} is nonnegative 
if  $\| S_1 \| + \hat c \sqrt{r_{k+1}  N } \le \xi \gamma \hat n$.

It remains to show that $Z^*$ is the unique optimal solution with high probability if \eqref{eq: S1 bound cond} holds.
An argument similar to that in the proof of Theorem~\ref{T: S opt conds} show that $Z^*$ is the unique optimal solution of \eqref{eq: WKDB relaxation}
if $n_s \e^T W_{U_q, V_q} \e > n_q \e^T W_{U_q, V_S} \e$
for all $q,s \in \{1,\dots, k\}$.
Moreover, an argument identical to that of the proof of Lemma \ref{T: WKDC uniqueness},
establishes that this uniqueness condition holds with high probability for sufficiently large $\hat n$.
This completes the proof.


\subsection{Positive semidefiniteness of $S$}
\label{sec: w S bound}
It remains to show that $S$, as defined by \eqref{eq: WKDB dual feas}, satisfies \eqref{eq: S1 bound cond} to prove that $Z^*$ is the unique optimal solution of \eqref{eq: WKDB relaxation}.
In particular, we will derive the following upper bound on the spectral norm of $S_1$.

\begin{lemma} \label{thm: B S bound}	
	There exist scalars $c_1, c_2 > 0$ such that
	\begin{equation} \label{eqn: B S bound}
		\| S_1\| \le  c_1\sqrt{ k N }
			+ c_2 \sqrt{N}  + \beta \tau_{k+1} n_{k+1}
	\end{equation}
	with probability tending exponentially to $1$ as $\hat n$ approaches $\infty$.	
\end{lemma}

To establish Lemma~\ref{thm: B S bound},
we decompose $S_1$ as
$
     S_1 = \tilde S_1 + \tilde S_2 + \tilde S_3 +  \tilde S_4,
$
where $\tilde S_i \in \Sigma^{M+N}$, $i=1,\dots, 4$, are defined as follows.
We take
\begin{align*}
     \tilde S_1(U_q, U_s)  &= { (\lambda_{U_q} - \E[\lambda_{U_q}])\e^T  +  \e ( \lambda_{U_s} - \E[\lambda_{U_s} ] )^T ,  }  \\
     \tilde S_1(V_q, V_s) & = {  (\phi_{V_q} - \E[\phi_{V_q}])\e^T + \e ( \phi_{V_s} - \E[\phi_{V_s} ] )^T, }  
\end{align*}
for all $q, s \in \{1,\dots, k+1\}$
and set all remaining entries of $\tilde S_1$ to be $0$.
Next,
let
$$
	\tilde S_2(U_q, V_s) = \branchdef{ \beta \e\e^T - R^{q,q}, &\mbox{if } q = s, \; q \in \{1,\dots,k\} \\ \beta \e\e^T - W_{U_q, V_s}, &\mbox{otherwise},}
$$
where $R^{q,q}$ is a $m_q \times n_q$ random matrix with \iid entries sampled according to $\Omega_2$, the distribution of the off-diagonal blocks of $W$.
We choose $\tilde S_2(V_q, U_s) =  \tilde S_2(U_s, V_q)^T$ and set all other entries of $\tilde S_2$ equal to $0$.
Next, we set 
$
	\tilde S_3(U_q, V_q) = \alpha \e\e^T - W_{U_q, V_q}
$
and $\tilde S_3(V_q, U_q) = \tilde S_3(U_q, V_q)^T$
for all $q = 1,\dots, k$, and set all remaining entries of $\tilde S_3$ equal to $0$.
Finally, $\tilde S_4$ is the correction matrix for the  diagonal blocks of $\tilde S_2$. That is, we take
$\tilde S_4 (U_q, V_q) = R^{q,q} - \beta \e\e^T$, and $\tilde S_4(V_q, U_q) = \tilde S_4(U_q, V_q)^T$,
for all $q=1,\dots, k$, we take
$
	\tilde S_4(U_{k+1}, V_{k+1}) = \tilde S_4(V_{k+1}, U_{k+1})^T = - \beta \e\e^T,
$
and all remaining entries of $\tilde S_4$ are $0$.
To obtain the desired bound on $\|S_1\|$, we bound each of $\|\tilde S_1\|$, $\|\tilde S_2\|$, $\|\tilde S_3\|$, and $\|\tilde S_4\|$ individually.
To do so, we will repeatedly invoke the following bound on the norm of a random rectangular matrix
(see \cite{Geman:1980} and \cite[Theorem~2]{Ames-Vavasis}).

\begin{theorem}
    \label{geman tail bound}
    Let $A$ be a $\lceil yn \rceil \times n$ random matrix with \iid entries sampled
    from a  distribution with mean $\mu$ and variance $\sigma^2$ such that 
    $A_{ij} \in [0,1]$ for all $i \in \{1,\dots, \ceil{yn}\}$, $j \in \{1, \dots, n\}$
    for fixed $y \in \R_+$.
    Then there exist $c_1, c_2, c_3, c_4 > 0$ depending only on $\sigma$ and $y$ such that
    $
        \|A - \mu \e\e^T \| \le c_4 \sigma \sqrt{n}
    $
    with probability at least $1 - c_1 \exp(-c_2 n^{c_3})$.
\end{theorem}

Applying Theorem~\ref{geman tail bound}  shows that
$
	\|\tilde S_2 \| = \|\tilde S_2 (U,V) \| \le \tilde c_2 \sqrt{N}
$
for some scalar $\tilde c_2$ with \pteto{\hat n} by the block structure of $\tilde S_2$.
A similar argument shows that there exists scalar $\tilde c_3>0$ such that
$
	\|\tilde S_3 \| 
	\le \tilde c_3 \max_{q=1,\dots, k} \sqrt{n_q} \le \tilde c_3 \sqrt{N}
$
with \pteto{\hat n}.
Next,
$$
	\|\tilde S_4 \| =  \max \left\{ \tilde c_4 \max_{q=1,\dots, k} \sqrt{n_q}, \beta \sqrt{m_{k+1} n_{k+1}} \right\}  \le \tilde c_4 \sqrt{N} + \beta \tau_{k+1} n_{k+1}
$$
for some scalar $\tilde c_4 > 0$ with \pteto{\hat n}, again by Theorem~\ref{geman tail bound}.
Finally, a calculation similar to the derivation of the bound on $\|\tilde S_2\|$ in the proof of Lemma~\ref{thm: S bound}	
shows that $\|\tilde S_1\| = O(\sqrt{k N})$ with \pteto{\hat n}. Applying the triangle inequality and the union bound completes the proof.
\section{Numerical Experiments}
\label{sec: expts}
In this section, we empirically verify the performance of our heuristics for a variety of program inputs.
Specifically, we randomly generate symmetric matrices $W$ according to the planted cluster model, for
a number of distributions on the entries of $W$ and partitions $\{C_1, \dots, C_{k+1}\}$ of the rows and columns of $W$,
and compare the optimal solution of \eqref{E: w trace relax} to that corresponding to the planted partition.
Similarly, we also compare the optimal solution of \eqref{eq: WKDB relaxation}  to the matrix representation
of the planted partition  for $W$ sampled from the planted bicluster model.

In each experiment, we solve either \eqref{E: w trace relax} or \eqref{eq: WKDB relaxation} using the Alternating Direction Method of Multipliers (ADMM).
A comprehensive description of ADMM and similar algorithms is well beyond the scope of this manuscript; we direct the reader to the recent survey \cite{boyd2011distributed} for more details.
To solve \eqref{E: w trace relax}, we represent the feasible region as the intersection of two sets and apply ADMM to solve the resulting equivalent formulation.
In particular, let
$\Xi := \{X \in \Sigma^V: X\e \le \e, \; X \ge 0\},$
and 
$\Omega:=\{X \in \Sigma^V: \tr(X) = k, \; X \in \Sigma^V_{+} \}.$
Then we may rewrite \eqref{E: w trace relax} as
$
	\max \{ \tr(WY): X - Y = 0,\; X \in \Xi, \; 	Y \in \Omega \}.
$
We solve this problem iteratively as follows. In each iteration, we approximately minimize the augmented Lagrangian 
$
	L_\beta (X,Y,U) = \tr(WY) - \tr(U(X-Y)) + \frac{\beta}{2} \|X - Y\|^2_F
$
with respect to $Y$ and $X$ successively, and then update the dual multiplier $U$ as $U = U - \beta(X-Y)$.\footnote{The penalty parameter $\beta = \min \{ \max \{5n/k, 80 \}, 500\}/2$ was chosen via simulation, and seems to work well
for most problem instances.}
Here $\|\cdot\|_F$ denotes the Frobenius norm on $\Sigma^V$ defined by $\|X\|_F^2 = \tr(X^2)$.
As we will see, the resulting subproblems are convex and can be solved efficiently; therefore, this algorithm will converge to the optimal solution
of \eqref{E: w trace relax} (see \cite[Theorem~8]{eckstein1992douglas}).

Let $(X^k, Y^k, U^k)$ be the current iterate after $k$ iterations.
To update in the $Y$ direction, we minimize $L_\beta$ with respect to $Y$. That is, $Y^{k+1}$ is a minimizer of the subproblem
\begin{equation} \label{eq: y subproblem}
	\min_{Y \in \Omega} \frac{1}{2} \left\| Y - \rbra{ X^k - \frac{W + U^k}{\beta} }  \right\|^2_F.
\end{equation}
Let $X^k - (W+U^k)/\beta$	have eigenvalue decomposition $V \Diag (\bv^k) V^T$. Then, by the fact that both the Frobenius norm and the set $\Omega$ are invariant under 
unitary similarity transformations, we have
$
	Y^{k+1} = V \Diag (\y^*) V^T,
$	
where $\y^*$ is the optimal solution of
$
	\min \{ \| \y - \bv^k \|^2: \e^T \y = k, \; \y \ge 0 \},	
$	
by \cite[Proposition 2.6]{lu2010penalty}.
This latter subproblem admits an analytic solution, which can be computed efficiently; see \cite{van2008probing}.

Next, we take $X^{k+1}$ to be the optimal solution of
\begin{equation} \label{eq: x subproblem}
	\min_{X \in \Xi} \frac{1}{2} \left\|X - \rbra{ Y^k + \frac{U^k}{\beta} } \right\|^2_F.
\end{equation}
Unfortunately, this subproblem does not admit a closed-form solution. Instead, we approximately solve \eqref{eq: x subproblem}
by applying the spectral projected gradient method of \cite{birgin2000nonmonotone} to the dual of \eqref{eq: x subproblem}.
Taking the dual  of \eqref{eq: x subproblem} shows that
$$
	X^{k+1} = \left[  \rbra{Y^{k+1} + \frac{U^k}{\beta} } - \frac{\z^* \e^T + \e (\z^*)^T}{2} \right]_+,
$$
where $\z^*$ is the optimal solution of the dual problem
\begin{equation} \label{eq: x dual}
	\min_{\z \ge 0} \frac{1}{2} \left\| \left[ 	\rbra{Y^{k+1} + \frac{U^k}{\beta} } - \frac{\z \e^T + \e \z^T}{2} \right]_+ \right\|_F^2  + \z^T \e - \frac{1}{2} \left\| Y^{k+1} + \frac{U^k}{\beta} \right\|^2_F;
\end{equation}
Here, the operator $[\cdot]_+: \Sigma^V \ra  \Sigma^V \cap \R^{V\times V}_+$ maps  each $Z \in \Sigma^V$ to the matrix with $(i,j)$th  entry equal to 
$
	[[Z]_+]_{ij} = \max \{0, Z_{ij} \}
$	
for all $i,j \in V$.
Moreover, the objective function of the dual problem \eqref{eq: x dual} is both differentiable and coercive in $\z$, and, therefore, the dual
can be solved efficiently \cite{birgin2000nonmonotone}.
The algorithm is stopped when the relative duality gap $|v_p^{(k)} - v_d^{(k)}|/\max\{|v_p^{(k)}, 1\}$ 
and primal constraint violation are smaller than a desired error tolerance.

We solve \eqref{eq: WKDB relaxation} in a similar manner. In particular, we apply ADMM to minimize the augmented Lagrangian of the convex program
$
	\max \{ \tr(WY): X - Y = 0, \; X \in \Xi_B,\; Y \in \Omega_B\},
$
where $\Xi_B = \{X \in\Sigma^{U \cup V}: X_{U,U} \e \le \e, \; X_{V,V}\e \le \e, \; X \ge 0\}$ and 	$\Omega_B = \{Y \in \Sigma^{U\cup V}_{+}: \tr(Y) = 2k \}.$
It is important to note that $\Omega_B$ is a relaxation of the set
$\{Y \in \Sigma^{U \cup V}: \tr(Y_{UU}) = k, \; \tr(Y_{VV})=k\}$ and,
therefore, we are actually applying ADMM to solve a relaxation of \eqref{eq: WKDB relaxation}.
Here, the penalty parameter $\beta = \min \bbra{\max \bbra{ 5 n/k, 80},500}$ is used in the augmented Lagrangian.
As before, the subproblem to update $Y$ admits a closed-form solution using simplex projection,
and we update $X$ by applying the spectral projected gradient method to the dual subproblem
$$
	\min_{\lambda \ge 0, \phi \ge 0} \left\|\left[ 	\rbra{Y^{k+1} + \frac{U^k}{\beta} } - \Lambda \right]_+ \right\|_F^2  + \lambda^T \e + \phi^T \e - \frac{1}{2} \left\| Y^{k+1} + \frac{U^k}{\beta} \right\|^2_F,
$$
where
$$
	\Lambda := \mat{ {cc} \frac{1}{2}(\lambda \e^T + \e \lambda^T) & 0 \\ 0 & \frac{1}{2}(\phi \e^T + \e\phi^T) }.
$$	
Again, we stop the algorithm when both the relative duality gap and primal constraint violation are within
a desired error tolerance.

\begin{figure}[t] 
	\caption{Number of recoveries for $N$-node graph with $k$ planted cliques of size at least $\hat r$ and $W$ generated
	according to the distributions $\Omega_1 = Bern(0.75),$ $\Omega_2 = Bern(p)$.
	Brighter colours indicate a higher rate of recovery.}	
	\label{fig: KDC plots}
	\centering
	\subfloat[{$N=200$}]{\includegraphics[width=0.4\textwidth]{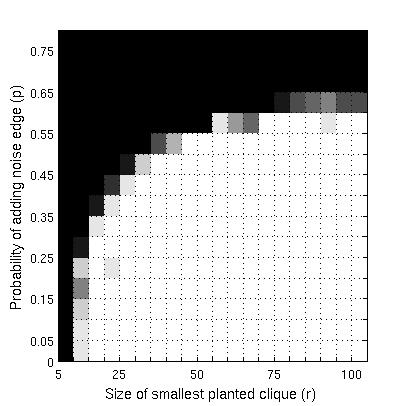} }  
	\subfloat[$N=500$]{\includegraphics[width=0.4\textwidth]{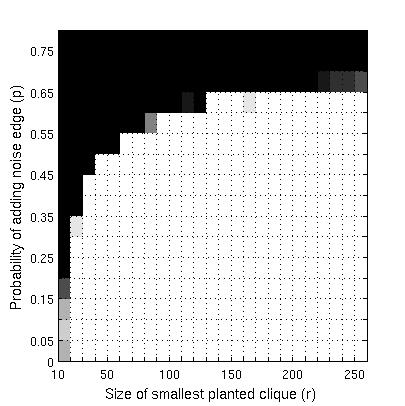}} 
\end{figure}

For $N=200$ and $N=500$ and a variety of choices of $\hat r \in  \{1, \dots, N\}$,  the following procedure  was repeated $10$ times.
We first partition the indices  $\{1, \dots, N\}$ into $k = \floor{N/\hat r}$  subsets $\{C_1, C_2, \dots, C_k\}$ of size at least $\hat r$.
We then  generate a random symmetric matrix $W \in \Sigma^n$ according to the planted cluster model with respect to  $\{C_1, C_2, \dots, C_k\}$ 
and one of two sets of probability distributions.
In the first, $W_{ij}$ is a Bernoulli random variable with probability of success $0.75$ if $i,j$ both belong to $C_\ell$ for some $\ell \in \{1,2,\dots, k\}$
and is a Bernoulli random variable with probability of success $p$ otherwise, for some fixed probability $p$.
In the second, each $W_{ij}$ is Gaussian with $\mu = \alpha$, $\sigma = 0.25$ for some  $\alpha \in [0.25, 1]$ if $i$ and $j$ belong to the same block, and $(\mu, \sigma^2) = (0.25, 0.25)$ otherwise.
For each choice of $p$ and $\alpha$, the ADMM procedure described above is called to approximately solve \eqref{E: w trace relax}.
In each experiment, the algorithm is terminated if the stopping criteria is achieved with error tolerance $\epsilon = 10^{-5}$ or after $500$ iterations, and
 the subproblem \eqref{eq: x subproblem} is solved to within  error tolerance  $\epsilon/10$ during each iteration.
 Let $X^*$ denote the optimal solution for \eqref{E: w trace relax} returned by the ADMM algorithm.
 We declare the block structure of $W$ to be successfully recovered if $\|X^* - X_0\|^2_F/\|X_0\|_F^2 < 10^{-3}$,
 where  $X_0$ is the proposed solution constructed according to \eqref{E: proposed sol}.

\begin{figure}[t] 
	\caption{Number of recoveries for $N$-node graph with $k$ planted cliques of size at least $\hat r$ and $W$ generated
	according to the distributions $\Omega_1 = N(\alpha, 0.25),$ $\Omega_2 = N(0.25, 0.25)$.}
	\label{fig: WKDC plots}
	\centering
	\subfloat[{$N=200$}]{\includegraphics[width=0.4\textwidth]{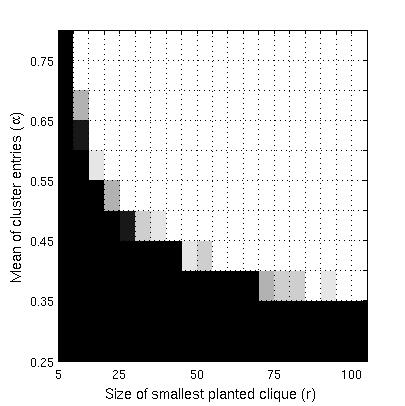} }  
	\subfloat[$N=500$]{\includegraphics[width=0.4\textwidth]{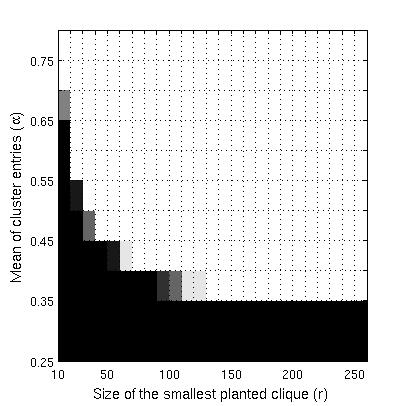}} 
\end{figure}

Figures~\ref{fig: KDC plots}~and~\ref{fig: WKDC plots} display the average number of successes for each choice of $\hat r$ for $W$ sampled from the planted cluster model according
to the Bernoulli and Gaussian distributions, respectively. 
The empirical performance of our heuristic appears to match that predicted by Theorem~\ref{T: weighted kdc results}.
For each choice of $p$ or $\alpha$, there is a sharp phase transition between zero and perfect recovery as $\hat r$ increases past some threshhold.
It should be noted that the recovery guarantees given by Theorem~\ref{T: weighted kdc results} appear to be conservative compared to those observed empirically;
we have perfect recovery for values of $\hat r$ smaller than the left-hand side of \eqref{A: wkdc guarantee bound} for many trials.
Moreover, $W$ generated according to the Gaussian model do not necessarily satisfy the assumption $0 \le W_{ij} \le 1$ for $i,j \in \{1,\dots, N\}$.

We repeated the experiment for bipartite graphs drawn from the planted bicluster model.
For $(M, N) = (250, 200)$, and various minimum bicluster sizes $(\hat m, \hat n)$,
we randomly sample weight matrices $W$ from the planted bicluster model according to some
partition of $\{1,\dots, M\} \times \{1,\dots, N\}$ into $k = \floor{N/\hat n}$ bicliques with left and right vertex sets
of size at least $\hat m$ and $\hat n$ respectively.
For each $W$, we solve \eqref{eq: WKDB relaxation} using the ADMM algorithm described above and
declare the block structure to be recovered if the returned optimal solution $Z^*$ satisfies
$ \|Z^*-Z_0\|^2_F /\|Z_0\|_F^2 < 10^{-3}$, where
$Z_0$ is the proposed solution constructed according to \eqref{eq: WKDB sol}.
We plot the number of successful recoveries for each $(\hat m, \hat n)$ and generating distribution in Figure~\ref{fig: KDB plots}.
As before, the empirical behaviour of our heuristic reflects that predicted by Theorem~\ref{thm: biclustering recovery guarantee},
although there is some evidence that this theoretical recovery guarantee may be overly pessimistic.

\begin{figure}[t!] 
	\caption{Number of recoveries for $(250, 200)$-node bipartite graph with $k$ planted bicliques of size at least $(1.25 \hat n, \hat n)$
	and $W$ generated according to the distributions $\Omega_1$ and $\Omega_2$.}
	\label{fig: KDB plots}
	\centering
	\subfloat[{$\Omega_1 = Bern(0.75),$ $\Omega_2 = Bern(p)$}]{\includegraphics[width=0.4\textwidth]{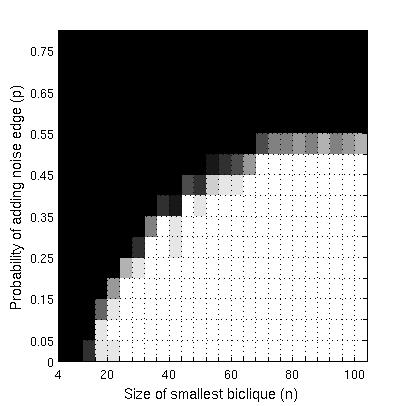} }  
	\subfloat[$\Omega_1 = N(\alpha, 0.25),$ $\Omega_2 = N(0.25, 0.25)$]{\includegraphics[width=0.4\textwidth]{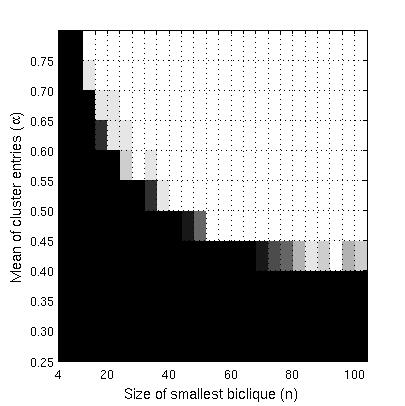}} 
\end{figure}



\section{Acknowledgements}
This research was supported in part by the Institute for Mathematics and its Applications with funds provided by the National Science Foundation,
and by a Postgraduate Scholarship from NSERC (Natural Science and Engineering Research Council of Canada).
I am grateful to Stephen Vavasis, Henry Wolkowicz, Levent Tun{\c{c}}el, Shai Ben-David, Inderjit Dhillon, Ben Recht, and
Ting Kei Pong for their helpful comments and suggestions. I am especially grateful to Ting Kei for his help implementing the ADMM algorithms used
to perform the numerical trials.
I would also like to thank Warren Schudy for suggesting some relevant references that were omitted
in an earlier version.
Finally, I thank the two anonymous reviewers, whose suggestions vastly improved the presentation and
organization of this paper.


\bibliographystyle{plain}

\bibliography{cliquebib,clusteringbib,background,nnmbib}   



\appendix
\cleardoublepage








\end{document}